\theoremstyle{plain}
\newtheorem{thm}{Theorem}[section]
\numberwithin{equation}{section}
\numberwithin{figure}{section}  %Comment out for sequentially-numbered
\theoremstyle{plain}
\newtheorem{pr}[thm]{Proposition}
\theoremstyle{plain}
\newtheorem{cor}[thm]{Corollary} %Delete [teo] to re-start numbering
\theoremstyle{plain}
\theoremstyle{plain}
\newtheorem{lem}[thm]{Lemma} %Delete [teo] to re-start numbering
\theoremstyle{plain}
\begin{document}
\title{On the fine spectrum of the forward difference
operator on the Hahn space}
\thanks{
**Corresponding author.}
\author{Med\.ine Ye\c s\.ilkayag\.il and Murat K\.ir\.i\c sc\.i**}
\subjclass[2010]{47A10, 47B37.} \keywords{Spectrum of an operator,
spectral mapping theorem, the Hahn sequence space, Goldberg's
classification.}
\address[Medine Ye\c silkayagil]{Department of Mathematics, U\c sak University, 1 Eyl\"{u}l Campus, 64200 - U\c{s}ak, Turkey}
\email{medine.yesilkayagil@usak.edu.tr}
\address[Murat Kiri\c s\c ci] {Department of Mathematical Education, Hasan Ali Yucel Education Faculty,
\.Istanbul University, Vefa, 34470--\.Istanbul, Turkey}
\email{mkirisci@hotmail.com, murat.kirisci@istanbul.edu.tr}
\begin{abstract}
The main purpose of this paper is to determine the fine spectrum
with respect to Goldberg's classification of the difference operator
over the sequence space $h$. As a new development, we give the
approximate point spectrum, defect spectrum and compression spectrum
of the difference operator on the sequence space $h$.
\end{abstract}
\maketitle
\section{Introduction}
An important branch of mathematics due to its application in
other branches of science is a Spectral Theory.
It has been proved to be a very useful tool because of its
convenient and easy applicability of the different fields.
In numerical analysis, the spectral values may determine whether a discretization
of a differential equation will get the right answer or how fast a conjugate gradient
iteration will converge. In aeronautics, the spectral values may determine whether
the flow over a wing is laminar or turbulent. In electrical engineering, it may determine
the frequency response of an amplifier or the reliability of a power system. In
quantum mechanics, it may determine atomic energy levels and thus, the frequency
of a laser or the spectral signature of a star. In structural mechanics, it may determine
whether an automobile is too noisy or whether a building will collapse in
an earthquake. In ecology, the spectral values may determine whether a food web
will settle into a steady equilibrium. In probability theory, they may determine the
rate of convergence of a Markov process.

There are several studies about the spectrum of the linear
operators defined by some triangle matrices over certain sequence
spaces. This long-time behavior was intensively studied over many years,
starting with the work by Wenger \cite{wen}, who established the
fine spectrum of the integer power of the Ces\`{a}ro operator in c.
The generalization of \cite{wen} to the weighted mean methods
is due to Rhoades \cite{rho}. The study of the fine spectrum of the  operator
on the sequence space $\ell_{p}$, $(1< p <\infty)$ was initiated by
Gonz\`{a}lez \cite{gon}. The method of the spectrum of
the Ces\`{a}ro operator  prepared by Reade \cite{rea}, Akhmedov and
Ba\c sar \cite{aafb}, and Okutoyi \cite{oku}, respectively, whose established to this idea on
the sequence spaces $c_{0}$ and $bv$.In \cite{yil}, the fine spectrum of the Rhaly operators
on the sequence spaces $c_{0}$ and $c$ was given. The spectrum and fine spectrum for
p-Ces\`{a}ro operator acting on the space $c_{0}$ was studied by Co\c skun \cite{cc}.
The investigation of the spectrum and the fine spectrum of the difference
operator on the sequence spaces $s_{r}$ and $c_{0}$, $c$  was made by
Malafosse \cite{mala} and Altay and Ba\c sar \cite{bafb1}, respectively, where
$s_{r}$ denotes the Banach space of all sequences $x =(x_{k})$ normed by
$\|x\|_{s_r}=\underset{k\in\mathbb{N}}{\sup}|x_k|/r^k$, $(r>0)$. The idea of the
fine spectrum applied to the Zweier matrix which is a band matrix as an operator
over the sequence spaces $\ell_1$ and $bv$ by Altay and Karakus \cite{bamk}.
Let $\Delta_{\nu}$ is double sequential band matrix on $\ell_{1}$ such that
$(\Delta_{\nu})_{nn}=\nu_n$ and $(\Delta_{\nu})_{n+1,n}=-\nu_n$ for all
$n\in\mathbb N$, under certain conditions on the sequence $\nu =(\nu_{k})$.
The spectra and the fine spectra of matrix $\Delta_{\nu}$ were determined by Srivastava and
Kumar \cite{srsk}. Afterwards, these results of the double sequential band matrix
$\Delta_{\nu}$ generalized to the double sequential band matrix
$\Delta_{uv}$ such that defined by
$\Delta_{uv}x=(u_{n}x_{n}+v_{n-1}x_{n-1})_{n\in\mathbb N}$ for all
$n\in\mathbb N$ (see \cite{Sr11}). In \cite{mal}, the
fine spectra of the Toeplitz operators represented by an upper and
lower triangular $n$-band infinite matrices, over the sequence
spaces $c_0$ and $c$ was computed. The fine
spectra of upper triangular double-band matrices over the sequence
spaces $c_0$ and $c$ was obtained by Karakaya and Altun \cite{vkma}.
Let  $\Delta_{a,b}$ is a double band matrix with the convergent sequences
$\widetilde{a}=(a_k)$ and $\widetilde{b}=(b_k)$ having certain properties,
over the sequence space $c$. The fine spectrum of the matrix $\Delta_{a,b}$
was examined by Akhmedov and El-Shabrawy \cite{aafb3}. The approach to the
fine spectrum with respect to Goldberg's classification studied with of the
operator $B(r,s,t)$ defined by a triple band matrix over the sequence spaces
$\ell_{p}$ and $bv_p$, $(1<p<\infty)$ by Furkan et al. \cite{Fu10}. Quite recently,
the fine spectrum with respect to Goldberg's classification of the operator defined
by the lambda matrix over the sequence spaces $c_{0}$ and $c$ computed by
Ye\c{s}ilkayagil and Ba\c{s}ar \cite{medine}.

Hahn sequence space is defined as $x=(x_{k})$ such that $\sum_{k=1}^{\infty}k|x_{k}-x_{k+1}|$ converges and $x_{k}$ is a null sequence and is denoted by $h$. Initially, this space was defined and studied to some general properties by Hahn \cite{Hahn}. It was examined different properties of this space by Goes and Goes \cite{GoesII} and Rao \cite{Rao}, \cite{RaoI}, \cite{RaoII}. Quite recently, the studies on Hahn sequence space has been compiled by Kirisci \cite{kirisci1}. Also in \cite{kirisci2}, it has been defined a new Hahn sequence space by Ces\`{a}ro mean.

In the present paper, our propose is to investigate the fine spectrum of the difference operator $\Delta$ on the sequence space $h$.
And also, we define the approximate point spectrum, defect spectrum and compression spectrum
of the difference operator on the sequence space $h$, as a new approach.

\section{Preliminaries and Definition}
Let $X$ and $Y$ be Banach spaces, and also let $T:X\rightarrow Y$ be
a bounded linear operator. The range of the operator $T$ defined by
\begin{eqnarray*}
R(T)=\{ y\in Y:y=Tx,\text{ }x\in X\}.
\end{eqnarray*}
The \textit{set of all bounded linear operators} on $X$ into itself
denoted by $B(X)$.

We choose any Banach space $X$ and let $T\in B(X) $. Then we can define
the \textit{adjoint} $T^{\ast }$ of $T$ is a bounded
linear operator on the dual $X^{\ast }$ of $X$ such that $\left(
T^{\ast }f\right)(x)=f(Tx)$ for all $f\in Y^{\ast }$ and $x\in X$.

Let $X\neq \{\theta\}$ be a non-trivial complex normed space. A linear operator
$T$ defined by $T:D(T) \rightarrow X$ on a subspace $D(T) \subseteq X$.
We do not assume that $D(T)$ is dense in $X$ or that T
has closed graph $\{(x,Tx) : x \in D(T)\} \subseteq X \times X$. We
mean by the expression \textit{ ''$T$ is invertible''} that there
exists a bounded linear operator $S : R(T) \rightarrow X$ for which
$ST=I$ on $D(T)$ and $\overline{R(T)}=X$; such that $S=T^{-1}$ is
necessarily uniquely determined, and linear; the boundedness of $S$
means that $T$ must be \textit{bounded below}, in the sense that
there is $k >0$ for which $\|Tx\| \geq k\|x\|$ for all $x\in D(T)$.
The perturbed operator defined on the same domain $D(T)$ as $T$ as follows:
\begin{eqnarray*}
T_{\alpha}=\alpha I-T
\end{eqnarray*}
such that associated with each complex number $\alpha$. The \textit{spectrum}
$\sigma(T,X)$ consists of those $\alpha\in\mathbb{C}$ for which
$T_{\alpha}$ is not invertible, and the \textit{resolvent} is the
mapping from the complement $\sigma(T,X)$ of the spectrum into the
algebra of bounded linear operators on X defined by $\alpha\mapsto
T_{\alpha}^{-1}$.

Let $\omega$ is the space of all complex valued sequences and $\phi$
the set of all infinitely nonzero sequences. A linear subspace of $\omega$
which contain $\phi$ said a \textit{sequence space}. We write $\ell_\infty$,
$c$, $c_0$ and $bv$ for the spaces of all bounded, convergent, null
and bounded variation sequences which are the Banach spaces with the
sup-norm $\|x\|_{\infty}=\underset{k\in\mathbb N}{\sup}|x_k|$ and
$\|x\|_{bv}=\stackrel{\infty}{\underset{k=0}{\sum}}|x_k-x_{k+1}|$,
respectively, while $\phi$ is not a Banach space with respect to any
norm, where $\mathbb{N}=\{0,1,2,\ldots\}$. Also by $\ell_p$, we
denote the space of all $p$--absolutely summable sequences which is
a Banach space with the norm
$\|x\|_p=\left(\stackrel{\infty}{\underset{k=0}{\sum}}|x_k|^p\right)^{1/p}$,
where $1\leq p<\infty$.

Let $\mu$ and $\nu$ be two sequence spaces, and $A=(a_{nk})$ be an
infinite matrix of complex numbers $a_{nk}$, where
$k,n\in\mathbb{N}$. Then, we say that $A$ defines a matrix mapping
from $\mu$ into $\nu$, and we denote it by writing
$A:\mu\rightarrow\nu$ if for every sequence $x=(x_{k})\in\mu$, the
$A$-transform $Ax =\{(Ax)_{n}\}$ of $x$ is in $\nu$; where
\begin{eqnarray}\label{eq04}
(Ax)_{n}=\sum_{k=0}^{\infty}a_{nk}x_{k}~\textrm{ for all
}~n\in\mathbb{N}.
\end{eqnarray}
By $(\mu:\nu)$, we denote the class of all matrices $A$ such that $A
:\mu\rightarrow\nu$. Thus, $A\in(\mu:\nu)$ if and only if the series
on the right side of (\ref{eq04}) converges for each
$n\in\mathbb{N}$ and each $x\in\mu$, and we have $Ax=\{(Ax)_{n}\}_{n
\in\mathbb{N}}\in\nu$ for all $x\in\mu$.

The $BK-$space $h$ of all sequences $x=(x_{k})$ such that
\begin{eqnarray*}
h=\left \{x: \sum_{k=1}^{\infty} k|\Delta x_{k}|<\infty  ~\textrm{ and }~  \lim_{k \rightarrow \infty}x_{k}=0 \right\}
\end{eqnarray*}
was defined by Hahn \cite{Hahn}. Here and after $\Delta$ denotes the forward difference operator, that is, $\Delta x_{k}=x_{k}-x_{k+1}$, for all $k\in \mathbb{N}$. The following norm
\begin{eqnarray*}
\|x\|_{h}=\sum_{k} k|\Delta x_{k}|+\sup_{k}|x_{k}|
\end{eqnarray*}
was given on the space $h$ by Hahn \cite{Hahn} (and also \cite{GoesII}).
Rao \cite[Proposition 2.1]{Rao} defined a new norm on $h$ as $\|x\|=\sum_{k} k|\Delta x_{k}|.$

Hahn proved following properties of the space $h$:
\begin{lem}\label{lem1}
The following statements hold:
\begin{itemize}
\item[(i)] $h$ is a Banach space.
\item[(ii)]$h \subset \ell_{1} \cap \int c_{0}$.
\item[(iii)] $h^{\beta}=\rho_{\infty}$,
\end{itemize}
where $\int \lambda=\big\{x=(x_{k})\in \omega: (kx_{k})\in \lambda \big\}$ and $\rho_{\infty}=\big\{x=(x_{k})\in \omega: \sup_{n}n^{-1}\big|\sum_{k=1}^{n}x_{k}\big| <\infty \big\}$.
\end{lem}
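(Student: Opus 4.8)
The plan is to make everything hinge on the representation of an element $x\in h$ through its forward differences. Since $x_k\to 0$, telescoping $\Delta x_j=x_j-x_{j+1}$ gives the identity $x_k=\sum_{j=k}^{\infty}\Delta x_j$, and if I set $y_k:=k\,\Delta x_k$ then $x\in h$ is equivalent to $y\in\ell_1$ together with $x_k\to 0$; moreover the inverse reconstruction reads $x_k=\sum_{j=k}^{\infty}y_j/j$. Thus $x\mapsto y$ identifies $h$ with $\ell_1$ (note $\sum_k k|\Delta x_k|=\sum_k|y_k|$), and this bijection is the engine driving all three parts.

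For part (i) I would first check that $\|\cdot\|_h$ is genuinely a norm: homogeneity is clear, the triangle inequality follows by applying it termwise to $\sum_k k|\Delta x_k|$ and to $\sup_k|x_k|$ separately, and definiteness comes from the $\sup_k|x_k|$ summand. Completeness is then the standard $BK$-argument: given a Cauchy sequence $(x^{(n)})$ in $h$, the bound $\sup_k|x^{(n)}_k-x^{(m)}_k|\le\|x^{(n)}-x^{(m)}\|_h$ forces each coordinate to converge to some $x_k$; one shows $x=(x_k)\in h$ and $x^{(n)}\to x$ in $\|\cdot\|_h$ by writing $\sum_{k=1}^{N}k|\Delta(x^{(n)}_k-x^{(m)}_k)|\le\|x^{(n)}-x^{(m)}\|_h$, letting $m\to\infty$ for fixed $N$, and then $N\to\infty$. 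For part (ii), from $x_k=\sum_{j\ge k}\Delta x_j$ I get $k|x_k|\le\sum_{j\ge k}j|\Delta x_j|$, which tends to $0$ as a tail of the convergent series $\sum_j j|\Delta x_j|$, so $(kx_k)\in c_0$, i.e. $x\in\int c_0$; and by Tonelli on nonnegative terms $\sum_{k\ge 1}|x_k|\le\sum_{k\ge 1}\sum_{j\ge k}|\Delta x_j|=\sum_{j\ge 1}j|\Delta x_j|<\infty$, giving $x\in\ell_1$.

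For part (iii) the tool is Abel summation. Writing $A_n=\sum_{k=1}^{n}a_k$ (with $A_0=0$) one has $\sum_{k=1}^{n}a_kx_k=A_nx_n+\sum_{k=1}^{n-1}A_k\,\Delta x_k$, and under the isomorphism this is the statement $\sum_{k}a_kx_k=\sum_{j}\frac{A_j}{j}\,y_j$. If $a\in\rho_\infty$ then $|A_j|\le Cj$, so $(A_j/j)\in\ell_\infty$ and, since $y\in\ell_1$, the series converges absolutely; the boundary term satisfies $A_nx_n\to 0$ precisely because $(nx_n)\in c_0$ from part (ii). Hence $\rho_\infty\subseteq h^\beta$. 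For the converse, $\sum_k a_kx_k$ converges for every $x\in h$ if and only if $\sum_j\frac{A_j}{j}y_j$ converges for every $y\in\ell_1$, that is, if and only if $(A_j/j)\in\ell_1^{\beta}=\ell_\infty$, which is exactly $a\in\rho_\infty$.

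I expect the main obstacle to lie in part (iii), and specifically in the inclusion $h^\beta\subseteq\rho_\infty$. The delicate points are justifying the interchange of summation not merely in an absolutely convergent instance but as a genuine equivalence of convergence, and exhibiting, when $(A_j/j)\notin\ell_\infty$, an explicit $y\in\ell_1$ supported on indices where $|A_j/j|$ is large (e.g. choosing $j_n$ with $|A_{j_n}/j_n|\ge 2^n$ and $|y_{j_n}|=2^{-n}$) so that the terms of $\sum_j\frac{A_j}{j}y_j$ fail to tend to zero. Controlling the boundary term $A_nx_n$ uniformly is the other subtlety, and it is resolved by the sharp estimate $(nx_n)\in c_0$ established in part (ii), which is why the three statements are best proved in this order.
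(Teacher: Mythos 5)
Your parts (i) and (ii) are sound, and so is the inclusion $\rho_\infty\subseteq h^{\beta}$: there the Abel identity $\sum_{k=1}^{n}a_kx_k=A_nx_n+\sum_{k=1}^{n-1}A_k\Delta x_k$ works exactly as you describe, because the hypothesis $|A_n|\le Cn$ lets $(nx_n)\in c_0$ kill the boundary term and makes $\sum_k A_k\Delta x_k$ absolutely convergent. (For what it is worth, the paper itself gives no proof of this lemma at all; it is quoted from Hahn's 1922 paper, so there is no ``paper's argument'' to match, only the question of whether yours is complete.)

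The genuine gap is in the reverse inclusion $h^{\beta}\subseteq\rho_\infty$ --- precisely where you predicted trouble --- and your proposed resolution does not close it. You claim that convergence of $\sum_k a_kx_k$ for all $x\in h$ is \emph{equivalent} to convergence of $\sum_j (A_j/j)\,y_j$ for all $y\in\ell_1$, and that the boundary term $A_nx_n$ is ``resolved by the sharp estimate $(nx_n)\in c_0$''. That is circular: $nx_n\to 0$ controls $A_nx_n$ only when $|A_n|=O(n)$, which is exactly the statement to be proved; when $a\notin\rho_\infty$ nothing prevents $A_Nx_N$ from being enormous. Consequently your gliding hump, as sketched, only shows that the terms of the \emph{transformed} series fail to tend to zero; since $S_N=A_Nx_N+\sum_{k<N}(A_k/k)y_k$, this does not exclude that $S_N$ converges with the two unbounded pieces cancelling, so divergence of $\sum_k a_kx_k$ is not established. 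Two standard repairs: (1) use the uniform boundedness principle (legitimate here, since (i) gives completeness): the functionals $f_N(x)=\sum_{k=1}^{N}a_kx_k$ are bounded on $h$ and pointwise convergent, hence uniformly bounded, and testing against the blocks $x^{(n)}=(1,\dots,1,0,0,\dots)\in h$, for which $\|x^{(n)}\|_h=n+1$ and $f_n\big(x^{(n)}\big)=A_n$, gives $\sup_n|A_n|/n<\infty$; or (2) keep the gliding hump but choose the support \emph{inductively}, demanding, say, $|A_{j_{n}}|/j_{n}\ge 4^{n}$ \emph{and} $j_{n+1}\ge 2^{n}|A_{j_n}|$. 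With $|y_{j_m}|=2^{-m}$ one then computes, at $N=j_n$, that $S_{j_n}=T_n+A_{j_n}\sum_{m\ge n+1}y_{j_m}/j_m$ with $T_n=\sum_{m\le n}(A_{j_m}/j_m)y_{j_m}$, and the error term is at most $|A_{j_n}|\,2^{-n}/j_{n+1}\le 4^{-n}\to 0$, while $|T_n-T_{n-1}|\ge 2^{n}$ forces $T_n$ (hence $S_{j_n}$) to diverge. Either repair completes your argument; without one of them the hard half of (iii) is unproved.
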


Functional analytic properties of the $BK-$space $bv_{0}\cap
d\ell_{1}$ was studied by Goes and Goes \cite{GoesII}, where
$d\ell_{1}=\{x=(x_k)\in
\omega:\sum_{k=1}^{\infty}\frac{1}{k}|x_k|<\infty\}$. Also, in
\cite{GoesII}, the arithmetic means of sequences in $bv_{0}$ and
$bv_{0}\cap d\ell_{1}$ were considered, and used the fact that the
Ces\`{a}ro transform $(n^{-1}\sum_{k=1}^{n}x_{k})$ of order one
$x\in bv_{0}$ is a quasiconvex null sequence.

Rao \cite{Rao} studied some geometric properties of Hahn sequence space and gave the
characterizations of some classes of matrix transformations. Also, in \cite{RaoI} and
\cite{RaoII}, Rao examined the different properties of Hahn sequence space.

Balasubramanian and Pandiarani \cite{balasub} defined the new sequence space $h(F)$
called the Hahn sequence space of fuzzy numbers and proved that $\beta-$ and
$\gamma-$duals of $h(F)$ is the Ces\`{a}ro space of the set of all fuzzy bounded sequences.

Until the new studies of Kiri\c s\c ci \cite{kirisci1,kirisci2}, there has not been
any work containing the Hahn sequence space.

\section{Subdivision of the Spectrum}
In this section, we define the parts called point
spectrum, continuous spectrum, residual spectrum, approximate point
spectrum, defect spectrum and compression spectrum of the spectrum.
There are many different ways to subdivide the spectrum of a bounded
linear operator. Some of them are motivated by applications to
physics, in particular, quantum mechanics.
\subsection{The Point Spectrum, Continuous Spectrum and Residual Spectrum} The name resolvent is appropriate since $T_{\alpha}^{-1}$ helps to solve the equation $T_{\alpha}x=y$. Thus,
$x=T_{\alpha}^{-1}y$ provided that $T_{\alpha}^{-1}$ exists. More
importantly, the investigation of properties of $T_{\alpha}^{-1}$
will be basic for an understanding of the operator $T$ itself.
Naturally, many properties of $T_{\alpha}$ and $T_{\alpha}^{-1}$
depend on $\alpha$, and the spectral theory is concerned with those
properties. For instance, we shall be interested in the set of all
$\alpha$'s in the complex plane such that $T_{\alpha}^{-1}$ exists.
Boundedness of $T_{\alpha}^{-1}$ is another property that will be
essential. We shall also ask for what $\alpha$'s the domain of
$T_{\alpha}^{-1}$ is dense in X, to name just a few aspects. A
\textit{regular value} $\alpha$ of $T$ is a complex number such that
$T_{\alpha}^{-1}$ exists and is bounded whose domain is dense in X.
For our investigation of T, $T_{\alpha}$ and $T_{\alpha}^{-1}$, we
need some basic concepts in the spectral theory which are given, as
follows (see  Kreyszig \cite[pp. 370-371]{ek}):

The \textit{resolvent set} $\rho (T,X)$ of $T$ is the set of all
regular values $\alpha$ of $T$.  Furthermore, the spectrum $\sigma
(T,X)$ is partitioned into the following three disjoint sets:

The \textit{point (discrete) spectrum} $\sigma_p(T,X)$ is the set
such that $T_\alpha^{-1}$ does not exist. An
$\alpha\in\sigma_p(T,X)$ is called an \textit{eigenvalue} of $T$.

The \textit{continuous spectrum} $\sigma_c(T,X)$ is the set such
that $T_\alpha^{-1}$ exists and is unbounded, and the domain of
$T_\alpha^{-1}$ is dense in $X$.

The \textit{residual spectrum} $\sigma_r(T,X)$ is the set such that
$T_\alpha^{-1}$ exists (and may be bounded or not) but the domain of
$T_\alpha^{-1}$ is not dense in $X$.

Therefore, these three subspectra form a disjoint subdivision such
that
\begin{eqnarray}\label{eq1}
\sigma (T,X)=\sigma_{p}(T,X)\cup \sigma_{c}(T,X)\cup \sigma
_{r}(T,X).
\end{eqnarray}To avoid trivial misunderstandings, let us say that some of the sets defined above may
be empty. This is an existence problem which we shall have to
discuss. Indeed, it is well-known that
$\sigma_{c}(T,X)=\sigma_{r}(T,X)=\emptyset$ and the spectrum
$\sigma(T,X)$ consists of only the set $\sigma_{p}(T,X)$ in the
finite-dimensional case.
\subsection{The Approximate Point Spectrum, Defect Spectrum and Compression Spectrum}
In this subsection, three more subdivision of the spectrum called the \textit{approximate point spectrum},
\textit{defect spectrum} and \textit{compression spectrum} have been defined as in Appell et al. \cite{ap}.

Let $X$ is a Banach space and $T$ is a bounded linear operator. A $(x_{k}) \in X$ \textit{Weyl sequence}
for $T$ defined by $\left\Vert x_{k}\right\Vert =1$ and $\left\Vert Tx_{k}\right\Vert
\rightarrow 0$, as $ k\rightarrow \infty$.

In what follows, we call the set
\begin{eqnarray}\label{eq2}
\sigma_{ap}(T,X):=\{\alpha \in \mathbb{C}:\text{there exists a Weyl
sequence for }\alpha I-T\}
\end{eqnarray}
the \textit{approximate point spectrum of $T$}. Moreover, the
subspectrum
\begin{eqnarray}\label{eq30}
\sigma_{\delta }(T,X):=\{\alpha \in \mathbb{C}:\alpha I-T\text{ is
not surjective}\}
\end{eqnarray}
is called \textit{defect spectrum} of $T$.

The two subspectra given by (\ref{eq2}) and (\ref{eq30}) form a (not
necessarily disjoint) subdivision
\begin{eqnarray*}
\sigma (T,X)=\sigma_{ap}(T,X)\cup \sigma_{\delta }(T,X)
\end{eqnarray*}
of the spectrum. There is another subspectrum,
\begin{eqnarray*}
\sigma_{co}(T,X)=\{\alpha \in \mathbb{C}:\overline{R(\alpha
I-T)}\neq X\}
\end{eqnarray*}
which is often called \textit{compression spectrum} in the
literature. The compression spectrum gives rise to another (not
necessarily disjoint) decomposition
\begin{eqnarray*}
\sigma (T,X)=\sigma_{ap}(T,X)\cup \sigma_{co}(T,X)
\end{eqnarray*}
of the spectrum. Clearly, $\sigma_{p}(T,X)\subseteq
\sigma_{ap}(T,X)$ and $ \sigma_{co}(T,X)\subseteq \sigma_{\delta
}(T,X)$. Moreover, comparing these subspectra with those in
(\ref{eq1}) we note that
\begin{eqnarray*}
\sigma_{r}(T,X)&=&\sigma_{co}(T,X)\backslash \sigma_{p}(T,X),\\
\sigma_{c}(T,X)&=&\sigma (T,X)\backslash \lbrack \sigma_{p}(T,X)\cup
\sigma_{co}(T,X)\rbrack.
\end{eqnarray*}

Sometimes it is useful to relate the spectrum of a bounded linear
operator to that of its adjoint. Building on classical existence and
uniqueness results for linear operator equations in Banach spaces
and their adjoints are also useful.

\begin{pr} \cite[Proposition 1.3, p. 28]{ap}\label{pr21}
The following relations on the spectrum and subspectrum of an operator $T\in B(X)$ and its adjoint
$T^{\ast}\in B(X^{\ast })$ hold:
\begin{enumerate}
 \item[(a)] $\sigma(T^{\ast},X^{\ast})=\sigma (T,X)$.
  \item[(b)] $\sigma_{c}(T^{\ast},X^{\ast})\subseteq \sigma_{ap}(T,X)$.
  \item[(c)] $\sigma_{ap}(T^{\ast},X^{\ast})=\sigma_{\delta }(T,X)$.
  \item[(d)] $ \sigma_{\delta}(T^{\ast},X^{\ast})=\sigma_{ap}(T,X)$.
  \item[(e)] $\sigma_{p}(T^{\ast},X^{\ast})=\sigma_{co}(T,X)$.
  \item[(f)] $\sigma_{co}(T^{\ast},X^{\ast})\supseteq \sigma_{p}(T,X)$.
  \item[(g)] $\sigma(T,X)=\sigma_{ap}(T,X)\cup\sigma_{p}(T^{\ast},X^{\ast})=\sigma_{p}(T,X)\cup\sigma_{ap}(T^{\ast},X^{\ast})$.\end{enumerate}
\end{pr}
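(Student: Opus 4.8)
The plan is to reduce every one of the seven relations to a small duality dictionary for a single bounded operator and its adjoint, applied to $S := \alpha I - T$ (so that $S^{\ast} = \alpha I - T^{\ast}$), and then to read each statement off by translating the defining condition of the relevant subspectrum. The four facts I would establish first, valid for any $S \in B(X)$ on a Banach space $X$, are: (1) $\overline{R(S)} = X$ if and only if $S^{\ast}$ is injective, which follows from the annihilator identity $R(S)^{\perp} = \ker S^{\ast}$; (2) $S^{\ast}$ is surjective if and only if $S$ is bounded below; (3) $S$ is surjective if and only if $S^{\ast}$ is bounded below; and (4) $R(S^{\ast}) \subseteq (\ker S)^{\perp}$ always. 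Items (2) and (3) are the two halves of the closed range theorem combined with the open mapping theorem, while (4) is immediate from $(S^{\ast}f)(x) = f(Sx)$.

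With this dictionary in hand I would recall the operator-theoretic meaning of each subspectrum for $S$: $\alpha \in \sigma_{p}$ means $S$ is not injective; $\alpha \in \sigma_{ap}$ means $S$ is not bounded below (which is exactly the existence of a Weyl sequence, since $\|Sx_k\| \to 0$ with $\|x_k\| = 1$ is the failure of the estimate $\|Sx\| \geq k\|x\|$); $\alpha \in \sigma_{\delta}$ means $S$ is not surjective; and $\alpha \in \sigma_{co}$ means $\overline{R(S)} \neq X$.

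Then each item becomes essentially a one-line translation. For (a), by the open mapping theorem $S$ is invertible in $B(X)$ exactly when it is bounded below and surjective; facts (2) and (3) turn each of these two conditions into the corresponding one for $S^{\ast}$, so $S$ is invertible iff $S^{\ast}$ is, and the resolvent sets, hence the spectra, coincide. Part (c) is fact (3) read contrapositively: $S$ not surjective iff $S^{\ast}$ not bounded below. Part (d) is fact (2): $S$ not bounded below iff $S^{\ast}$ not surjective. Part (e) is fact (1): $\overline{R(S)} \neq X$ iff $\ker S^{\ast} \neq \{\theta\}$. Part (f) is fact (4) together with Hahn--Banach: if $\ker S \neq \{\theta\}$ then $(\ker S)^{\perp} \neq X^{\ast}$, whence $\overline{R(S^{\ast})} \subseteq (\ker S)^{\perp} \neq X^{\ast}$, giving the inclusion (and only an inclusion, because density of $R(S^{\ast})$ controls only the weak-$\ast$ behaviour of $S$). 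For (b), $\alpha \in \sigma_{c}(T^{\ast},X^{\ast})$ forces $R(S^{\ast}) \neq X^{\ast}$ (a bounded bijection would have bounded inverse), so $S^{\ast}$ is not surjective, and fact (2) gives $S$ not bounded below, i.e. $\alpha \in \sigma_{ap}(T,X)$.

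The part needing the most care, and which I expect to be the main obstacle, is (g). For the first equality I would start from the stated decomposition $\sigma(T,X) = \sigma_{ap}(T,X) \cup \sigma_{co}(T,X)$ and substitute (e) to obtain $\sigma(T,X) = \sigma_{ap}(T,X) \cup \sigma_{p}(T^{\ast},X^{\ast})$. For the second equality I would instead use $\sigma(T,X) = \sigma_{ap}(T,X) \cup \sigma_{\delta}(T,X)$ together with (c), giving $\sigma(T,X) = \sigma_{ap}(T,X) \cup \sigma_{ap}(T^{\ast},X^{\ast})$; the remaining point is that $\sigma_{ap}(T,X) \setminus \sigma_{p}(T,X)$, the set where $S$ is injective but has non-closed range and therefore is not surjective, already lies in $\sigma_{\delta}(T,X) = \sigma_{ap}(T^{\ast},X^{\ast})$, so $\sigma_{ap}(T,X)$ may be replaced by $\sigma_{p}(T,X)$ in the union without changing it. This absorption step is the delicate bookkeeping, since it rests on correctly classifying the ``injective but not bounded below'' case; throughout, the genuine analytic content is the closed range theorem, which is where any real difficulty is concentrated.
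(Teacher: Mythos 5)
The paper does not actually prove this proposition: it is imported verbatim, with citation, from Appell, De Pascale and Vignoli \cite[Proposition 1.3, p.~28]{ap} and then used as a black box (together with Table 1.2) in Section 4. So there is no internal argument to compare yours against; your proof must stand on its own, and it does. Your four-item dictionary is exactly the standard annihilator/closed-range package: $R(S)^{\perp}=\ker S^{\ast}$ gives your fact (1) and hence item (e); the closed range theorem, plus Hahn--Banach to convert $(\ker S)^{\perp}=X^{\ast}$ into $\ker S=\{\theta\}$, gives facts (2) and (3), hence items (c) and (d), and, combined with the open mapping theorem, items (a) and (b); and $R(S^{\ast})\subseteq(\ker S)^{\perp}$ with Hahn--Banach gives item (f), where your parenthetical reason that the inclusion cannot be upgraded to an equality (injectivity of $S$ corresponds to weak-$\ast$ density of $R(S^{\ast})$, not norm density) is also right. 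The one genuinely delicate step, the absorption in (g), is handled correctly: for an injective $S$, failure of the lower bound forces $R(S)$ to be non-closed --- otherwise the open mapping theorem applied to $S\colon X\to R(S)$ would bound $S^{-1}$ --- hence $S$ is not surjective, so $\sigma_{ap}(T,X)\setminus\sigma_{p}(T,X)\subseteq\sigma_{\delta}(T,X)=\sigma_{ap}(T^{\ast},X^{\ast})$, and the union $\sigma_{ap}(T,X)\cup\sigma_{ap}(T^{\ast},X^{\ast})$ is literally unchanged when $\sigma_{ap}(T,X)$ is shrunk to $\sigma_{p}(T,X)$; as you note, this is purely set-theoretic and needs no further appeal to (a). In short: your proof is correct and complete modulo the closed range theorem, which you are entitled to quote; what it buys over the paper's approach is self-containedness, since the paper's ``route'' is citation alone.
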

The relations (c)-(f) show that the approximate point spectrum is in
a certain sense dual to the defect spectrum and the point spectrum
is dual to the compression spectrum. The equality (g) implies, in
particular, that $\sigma (T,X)=\sigma_{ap}(T,X)$ if $X$ is a Hilbert
space and $T$ is normal. Roughly speaking, this shows that normal
(in particular, self-adjoint) operators on Hilbert spaces are most
similar to matrices in finite dimensional spaces (see Appell et al.
\cite{ap}).
\subsection{Goldberg's Classification of Spectrum}

If $X$ is a Banach space and $T\in B(X)$, then there are three
possibilities for $R(T)$:
\begin{itemize}
\item [(A)] $\quad R(T)=X$.
\item [(B)] $\quad R(T)\neq\overline{R(T)}=X$.
\item [(C)] $\quad \overline{R(T)}\neq X$.\end{itemize}
and
\begin{itemize}
\item [(1)] $\quad T^{-1}$ exists and is continuous.
\item [(2)] $\quad T^{-1}$ exists but is discontinuous.
\item [(3)] $\quad T^{-1}$ does not exist.
\end{itemize}

If these possibilities are combined in all possible ways, nine
different states are created. These are labelled by: $A_{1}$,
$A_{2}$, $A_{3}$, $ B_{1} $, $B_{2}$, $B_{3}$, $C_{1}$, $C_{2}$,
$C_{3}$. If an operator is in state $C_{2}$ for example, then
$\overline{R(T)} \neq X$ and $T^{-1}$ exists but is discontinuous
(see Goldberg \cite{go}).

\begin{picture}(300,300)(0,0)

\put(100,290){\line(1,0){180}} \put(100,270){\line(1,0){180}}
\put(100,250){\line(1,0){180}} \put(100,230){\line(1,0){180}}
\put(100,210){\line(1,0){180}} \put(100,190){\line(1,0){180}}
\put(100,170){\line(1,0){180}} \put(100,150){\line(1,0){180}}
\put(100,130){\line(1,0){180}} \put(100,110){\line(1,0){180}}

\put(100,290){\line(1,-1){20}} \put(100,270){\line(1,1){20}}
\put(100,270){\line(1,-1){20}} \put(100,250){\line(1,1){20}}
\put(100,250){\line(1,-1){20}} \put(100,230){\line(1,1){20}}
\put(100,230){\line(1,-1){20}} \put(100,210){\line(1,1){20}}
\put(100,210){\line(1,-1){20}} \put(100,190){\line(1,1){20}}
\put(100,190){\line(1,-1){20}} \put(100,170){\line(1,1){20}}
\put(100,170){\line(1,-1){20}} \put(100,150){\line(1,1){20}}
\put(100,150){\line(1,-1){20}} \put(100,130){\line(1,1){20}}

\put(120,290){\line(1,-1){20}} \put(120,270){\line(1,1){20}}
\put(120,270){\line(1,-1){20}} \put(120,250){\line(1,1){20}}
\put(120,250){\line(1,-1){20}} \put(120,230){\line(1,1){20}}
\put(120,230){\line(1,-1){20}} \put(120,210){\line(1,1){20}}
\put(120,210){\line(1,-1){20}} \put(120,190){\line(1,1){20}}
\put(120,190){\line(1,-1){20}} \put(120,170){\line(1,1){20}}
\put(120,170){\line(1,-1){20}} \put(120,150){\line(1,1){20}}
\put(120,150){\line(1,-1){20}} \put(120,130){\line(1,1){20}}
\put(120,130){\line(1,-1){20}} \put(120,110){\line(1,1){20}}

\put(140,290){\line(1,-1){20}} \put(140,270){\line(1,1){20}}
\put(140,270){\line(1,-1){20}} \put(140,250){\line(1,1){20}}
\put(140,230){\line(1,-1){20}} \put(140,210){\line(1,1){20}}
\put(140,210){\line(1,-1){20}} \put(140,190){\line(1,1){20}}
\put(140,190){\line(1,-1){20}} \put(140,170){\line(1,1){20}}
\put(140,170){\line(1,-1){20}} \put(140,150){\line(1,1){20}}
\put(140,150){\line(1,-1){20}} \put(140,130){\line(1,1){20}}
\put(140,130){\line(1,-1){20}} \put(140,110){\line(1,1){20}}

\put(160,290){\line(1,-1){20}} \put(160,270){\line(1,1){20}}
\put(160,270){\line(1,-1){20}} \put(160,250){\line(1,1){20}}
\put(160,250){\line(1,-1){20}} \put(160,230){\line(1,1){20}}
\put(160,230){\line(1,-1){20}} \put(160,210){\line(1,1){20}}
\put(160,210){\line(1,-1){20}} \put(160,190){\line(1,1){20}}
\put(160,190){\line(1,-1){20}} \put(160,170){\line(1,1){20}}
\put(160,170){\line(1,-1){20}} \put(160,150){\line(1,1){20}}
\put(160,150){\line(1,-1){20}} \put(160,130){\line(1,1){20}}
\put(160,130){\line(1,-1){20}} \put(160,110){\line(1,1){20}}

\put(180,290){\line(1,-1){20}} \put(180,270){\line(1,1){20}}
\put(180,270){\line(1,-1){20}} \put(180,250){\line(1,1){20}}
\put(180,250){\line(1,-1){20}} \put(180,230){\line(1,1){20}}
\put(180,230){\line(1,-1){20}} \put(180,210){\line(1,1){20}}
\put(180,210){\line(1,-1){20}} \put(180,190){\line(1,1){20}}
\put(180,190){\line(1,-1){20}} \put(180,170){\line(1,1){20}}
\put(180,170){\line(1,-1){20}} \put(180,150){\line(1,1){20}}
\put(180,150){\line(1,-1){20}} \put(180,130){\line(1,1){20}}
\put(180,130){\line(1,-1){20}} \put(180,110){\line(1,1){20}}

\put(200,290){\line(1,-1){20}} \put(200,270){\line(1,1){20}}
\put(200,270){\line(1,-1){20}} \put(200,250){\line(1,1){20}}
\put(200,250){\line(1,-1){20}} \put(200,230){\line(1,1){20}}
\put(200,230){\line(1,-1){20}} \put(200,210){\line(1,1){20}}
\put(200,210){\line(1,-1){20}} \put(200,190){\line(1,1){20}}
\put(200,190){\line(1,-1){20}} \put(200,170){\line(1,1){20}}
\put(200,170){\line(1,-1){20}} \put(200,150){\line(1,1){20}}
\put(200,150){\line(1,-1){20}} \put(200,130){\line(1,1){20}}
\put(200,130){\line(1,-1){20}} \put(200,110){\line(1,1){20}}

\put(220,290){\line(1,-1){20}} \put(220,270){\line(1,1){20}}
\put(220,270){\line(1,-1){20}} \put(220,250){\line(1,1){20}}
\put(220,250){\line(1,-1){20}} \put(220,230){\line(1,1){20}}
\put(220,230){\line(1,-1){20}} \put(220,210){\line(1,1){20}}
\put(220,210){\line(1,-1){20}} \put(220,190){\line(1,1){20}}
\put(220,190){\line(1,-1){20}} \put(220,170){\line(1,1){20}}
\put(220,170){\line(1,-1){20}} \put(220,150){\line(1,1){20}}
\put(220,150){\line(1,-1){20}} \put(220,130){\line(1,1){20}}
\put(220,130){\line(1,-1){20}} \put(220,110){\line(1,1){20}}

\put(240,290){\line(1,-1){20}} \put(240,270){\line(1,1){20}}
\put(240,270){\line(1,-1){20}} \put(240,250){\line(1,1){20}}
\put(240,250){\line(1,-1){20}} \put(240,230){\line(1,1){20}}
\put(240,230){\line(1,-1){20}} \put(240,210){\line(1,1){20}}
\put(240,210){\line(1,-1){20}} \put(240,190){\line(1,1){20}}
\put(240,190){\line(1,-1){20}} \put(240,170){\line(1,1){20}}
\put(240,170){\line(1,-1){20}} \put(240,150){\line(1,1){20}}
\put(240,150){\line(1,-1){20}} \put(240,130){\line(1,1){20}}
\put(240,130){\line(1,-1){20}} \put(240,110){\line(1,1){20}}

\put(260,290){\line(1,-1){20}} \put(260,270){\line(1,1){20}}
\put(260,270){\line(1,-1){20}} \put(260,250){\line(1,1){20}}
\put(260,250){\line(1,-1){20}} \put(260,230){\line(1,1){20}}
\put(260,230){\line(1,-1){20}} \put(260,210){\line(1,1){20}}
\put(260,210){\line(1,-1){20}} \put(260,190){\line(1,1){20}}
\put(260,190){\line(1,-1){20}} \put(260,170){\line(1,1){20}}
\put(260,170){\line(1,-1){20}} \put(260,150){\line(1,1){20}}
\put(260,150){\line(1,-1){20}} \put(260,130){\line(1,1){20}}
\put(260,130){\line(1,-1){20}} \put(260,110){\line(1,1){20}}

\put(90,278){\makebox(0,0){$C_{3}$}}
\put(90,258){\makebox(0,0){$C_{2}$}}
\put(90,238){\makebox(0,0){$C_{1}$}}
\put(90,218){\makebox(0,0){$B_{3}$}}
\put(90,198){\makebox(0,0){$B_{2}$}}
\put(90,178){\makebox(0,0){$B_{1}$}}
\put(90,158){\makebox(0,0){$A_{3}$}}
\put(90,138){\makebox(0,0){$A_{2}$}}
\put(90,118){\makebox(0,0){$A_{1}$}}

\put(271,100){\makebox(0,0){$C_{3}$}}
\put(251,100){\makebox(0,0){$C_{2}$}}
\put(231,100){\makebox(0,0){$C_{1}$}}
\put(211,100){\makebox(0,0){$B_{3}$}}
\put(191,100){\makebox(0,0){$B_{2}$}}
\put(171,100){\makebox(0,0){$B_{1}$}}
\put(151,100){\makebox(0,0){$A_{3}$}}
\put(131,100){\makebox(0,0){$A_{2}$}}
\put(111,100){\makebox(0,0){$A_{1}$}}

\put(100,80){\vector(1,0){80}} \put(70,110){\vector(0,1){80}}
\put(140,70){\makebox(0,0){$T$}}
\put(60,150){\makebox(0,0){$T^{*}$}}

\put(100,290){\line(0,-1){180}} \put(120,290){\line(0,-1){180}}
\put(140,290){\line(0,-1){180}} \put(160,290){\line(0,-1){180}}
\put(180,290){\line(0,-1){180}} \put(200,290){\line(0,-1){180}}
\put(220,290){\line(0,-1){180}} \put(240,290){\line(0,-1){180}}
\put(260,290){\line(0,-1){180}} \put(280,290){\line(0,-1){180}}

\put(195,40){\makebox(0,0){ \small{Table 1.1: State diagram for
$B(X)$ and $B(X^{\ast })$ for a non-reflective Banach space $X$
}}}\end{picture}

If $\alpha $ is a complex number such that $T_\alpha\in A_{1}$ or
$T_\alpha\in B_{1}$, then $\alpha \in \rho (T,X)$. All scalar values
of $\alpha $ not in $\rho (T,X)$ comprise the spectrum of $T$. The
further classification of $\sigma (T,X)$ gives rise to the fine
spectrum of $T$. That is, $\sigma (T,X)$ can be divided into the
subsets $A_{2}\sigma (T,X)=\emptyset $, $A_{3}\sigma (T,X)$,
$B_{2}\sigma (T,X)$, $B_{3}\sigma (T,X)$, $C_{1}\sigma (T,X)$,
$C_{2}\sigma (T,X)$, $C_{3}\sigma (T,X)$. For example, if $T_\alpha$
is in a given state, $C_{2}$ (say), then we write $\alpha \in
C_{2}\sigma (T,X)$.

By the definitions given above, we can illustrate the subdivision
(\ref{eq1}) in the following table:
\begin{center}
\begin{tabular}{|c|c|c|c|c|}
\hline &  & 1 & 2 & 3 \\ \hline
&  & $T^{-1}_\alpha$ exists & $T^{-1}_\alpha$ exists & $T^{-1}_\alpha$ \\
&  & and is bounded & and is unbounded & does not exist \\
\hline\hline
&  &  &  & $\alpha \in \sigma_{p}(T,X)$ \\
A & $R(\alpha I-T)=X$ & $\alpha \in \rho (T,X)$ & -- & $\alpha
\in \sigma_{ap}(T,X)$ \\
&  &  &  &  \\ \hline\hline
&  &  & $\alpha \in \sigma_{c}(T,X)$ & $\alpha \in \sigma_{p}(T,X)$ \\
B & $\overline{R(\alpha I-T)}=X$ & $\alpha \in \rho (T,X)$ & $\alpha
\in
\sigma_{ap}(T,X)$ & $\alpha \in \sigma_{ap}(T,X)$ \\
&  &  & $\alpha \in \sigma_{\delta }(T,X)$ & $\alpha \in \sigma
_{\delta }(T,X)$ \\ \hline\hline &  & $\alpha \in \sigma_{r}(T,X)$ &
$\alpha \in \sigma_{r}(T,X)$ & $\alpha
\in \sigma_{p}(T,X)$ \\
C & $\overline{R(\alpha I-T)}\not=X$ & $\alpha \in \sigma _{\delta
}(T,X)$
& $\alpha \in \sigma_{ap}(T,X)$ & $\alpha \in \sigma_{ap}(T,X)$ \\
&  &  & $\alpha \in \sigma_{\delta }(T,X)$ & $\alpha \in \sigma
_{\delta
}(T,X)$ \\
&  & $\alpha \in \sigma_{co}(T,X)$ & $\alpha \in \sigma _{co}(T,X)$
& $ \alpha \in \sigma_{co}(T,X)$ \\ \hline
\end{tabular}

\vspace{0.1cm}Table 1.2: Subdivision of spectrum of a linear
operator
\end{center}

One can observe by the closed graph theorem that in the case $A_{2}$
cannot occur in a Banach space $X$. If we are not in the third
column of Table 1.2, i.e., if $\alpha $ is not an eigenvalue of $T$,
we may always consider the resolvent operator $T^{-1}_\alpha$ (on a
possibly \textit{thin} domain of definition) as \textit{algebraic
inverse} of $\alpha I-T$.

The forward difference operator $\Delta$ is represented by the matrix
\begin{eqnarray*}
\Delta= \left[ \begin{array}{cccccc}
1 & -1 & 0 &0&\ldots \\
0&1 & -1&0& \ldots \\
0&0 & 1 &-1 &  \ldots \\0&0 & 0&1 &  \ldots \\
\vdots & \vdots &  \vdots&  \vdots& \ddots
\end{array} \right].
\end{eqnarray*}

\begin{cor}
$\Delta :h\rightarrow h$ is a bounded linear operator.
\end{cor}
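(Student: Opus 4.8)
The plan is to check the two conditions defining membership in $h$ for the image sequence and then to exhibit an explicit constant in the norm inequality; linearity is automatic since $\Delta$ acts as multiplication by the given matrix. Throughout I write $y=\Delta x$, so that $y_{k}=\Delta x_{k}=x_{k}-x_{k+1}$, and I work with Hahn's norm $\|x\|_{h}=\sum_{k}k|\Delta x_{k}|+\sup_{k}|x_{k}|$.

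First I would verify that $y\in h$ whenever $x\in h$. The null condition is immediate: since $x_{k}\to 0$ we get $y_{k}=x_{k}-x_{k+1}\to 0$. For the summability condition the decisive observation is that the forward difference of $y$ is the \emph{second} difference of $x$, namely
\begin{eqnarray*}
\Delta y_{k}=y_{k}-y_{k+1}=\Delta x_{k}-\Delta x_{k+1}.
\end{eqnarray*}
Hence, by the triangle inequality and a shift of the summation index in the second sum,
\begin{eqnarray*}
\sum_{k}k|\Delta y_{k}|\leq \sum_{k}k|\Delta x_{k}|+\sum_{k}k|\Delta x_{k+1}|\leq 2\sum_{k}k|\Delta x_{k}|<\infty,
\end{eqnarray*}
where the last bound comes from $\sum_{k}k|\Delta x_{k+1}|=\sum_{j}(j-1)|\Delta x_{j}|\leq \sum_{j}j|\Delta x_{j}|$. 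Thus $y$ satisfies both requirements and lies in $h$, so $\Delta$ maps $h$ into itself.

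It then remains to bound $\|y\|_{h}$. The summation part is already controlled by $2\sum_{k}k|\Delta x_{k}|\leq 2\|x\|_{h}$, while for the supremum part $\sup_{k}|y_{k}|=\sup_{k}|x_{k}-x_{k+1}|\leq 2\sup_{k}|x_{k}|\leq 2\|x\|_{h}$. Adding these gives $\|\Delta x\|_{h}\leq 4\|x\|_{h}$, so $\Delta$ is bounded with $\|\Delta\|\leq 4$.

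The computation is short; the only step that requires a genuine idea rather than bookkeeping is recognising that applying $\Delta$ twice produces the second difference and then re-indexing so that the extra factor of $k$ is absorbed back into the original weight, which is exactly what keeps the image inside $h$ with finite norm. Everything else is a direct use of the triangle inequality.
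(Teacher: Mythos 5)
Your proof is correct, and it is worth noting that the paper itself offers \emph{no} proof of this corollary: it is stated bare, presumably as a consequence of the known characterizations of matrix transformations on $h$ (Rao's work on matrix classes involving the Hahn space, cited in the paper) or of the general fact that a matrix mapping a $BK$-space into itself is automatically bounded by the closed graph theorem. Your argument instead verifies everything by hand: the key identity $\Delta y_{k}=\Delta x_{k}-\Delta x_{k+1}$, the re-indexing $\sum_{k}k|\Delta x_{k+1}|=\sum_{j}(j-1)|\Delta x_{j}|\leq\sum_{j}j|\Delta x_{j}|$ to absorb the weight, and the trivial estimate for the sup term. This buys a self-contained, elementary proof with an explicit constant, which the paper's approach (whatever it was) does not exhibit. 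Two small remarks: grouping the estimates as $\sum_{k}k|\Delta y_{k}|+\sup_{k}|y_{k}|\leq 2\bigl(\sum_{k}k|\Delta x_{k}|+\sup_{k}|x_{k}|\bigr)$ improves your constant from $4$ to $2$ at no cost; and since the paper's later computations use Rao's norm $\|x\|=\sum_{k}k|\Delta x_{k}|$ (equivalent to Hahn's, because $\sup_{k}|x_{k}|\leq\sum_{j}|\Delta x_{j}|$ for null sequences), your argument restricted to the summation part alone already gives boundedness in that norm with constant $2$.
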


\section{On the fine spectrum of the forward difference operator on the Hahn space}
In this section, we determine the spectrum and fine spectrum of the forward difference operator $\Delta$ on the Hahn space $h$ and calculate the norm of the operator
\begin{thm} \label{Thm1}
$\sigma(\Delta,h)=\left\{\alpha
\in\mathbb{C}:\left|1-\alpha\right|\leq 1\right\}$.
\end{thm}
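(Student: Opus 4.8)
The plan is to pin the spectrum down from two sides: first locate the point spectrum inside the open disc, then show that every $\alpha$ outside the closed disc is a regular value, and finally invoke closedness of the spectrum to fill in the boundary circle.

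First I would compute the point spectrum. Writing $\beta=\alpha-1$, the matrix $\alpha I-\Delta$ is upper bidiagonal with $\beta$ on the main diagonal and $1$ on the first superdiagonal, so the eigenvalue equation $(\alpha I-\Delta)x=\theta$ becomes the recurrence $\beta x_{n}+x_{n+1}=0$, whence $x_{n}=(1-\alpha)^{\,n-1}x_{1}$. Such a geometric sequence is null precisely when $|1-\alpha|<1$, and in that case $\Delta x_{k}=x_{k}-x_{k+1}=\alpha x_{k}$ gives $\sum_{k}k|\Delta x_{k}|=|\alpha|\,|x_{1}|\sum_{k}k|1-\alpha|^{\,k-1}<\infty$, so the eigenvector lies in $h$. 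Hence $\{\alpha:|1-\alpha|<1\}\subseteq\sigma_{p}(\Delta,h)\subseteq\sigma(\Delta,h)$.

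Next I would show that $|1-\alpha|>1$ forces $\alpha\in\rho(\Delta,h)$. Since $\beta\neq0$, the triangular matrix $\alpha I-\Delta$ has a formal upper-triangular inverse; setting $q=-1/\beta$ (so $|q|<1$) one finds $[(\alpha I-\Delta)^{-1}]_{nk}=-q^{\,k-n+1}$ for $k\geq n$ and $0$ otherwise. The main point is that $x:=(\alpha I-\Delta)^{-1}y$ lies in $h$ with $\|x\|_{h}\leq C\|y\|_{h}$ for every $y\in h$. For the $\sup$-part this is immediate from $|x_{n}|\leq |q|(1-|q|)^{-1}\sup_{k\geq n}|y_{k}|\to0$. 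The weighted-variation part is the crux, and here I would exploit that $\Delta$ commutes with $\alpha I-\Delta$ (both are upper-triangular Toeplitz operators): applying $\Delta$ to $(\alpha I-\Delta)x=y$ gives $(\alpha I-\Delta)(\Delta x)=\Delta y$, i.e. $\Delta x=(\alpha I-\Delta)^{-1}(\Delta y)$, so that
\begin{eqnarray*}
\Delta x_{n}=-q\sum_{j\geq0}q^{\,j}\,\Delta y_{n+j}.
\end{eqnarray*}
Multiplying by $n$, summing, interchanging the two summations and shifting the index $m=n+j$ then yields $\sum_{n}n|\Delta x_{n}|\leq|q|\sum_{j\geq0}|q|^{\,j}\sum_{m}m|\Delta y_{m}|=\tfrac{|q|}{1-|q|}\sum_{m}m|\Delta y_{m}|$, which is finite. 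Thus $x\in h$ and $(\alpha I-\Delta)^{-1}\in B(h)$, so $\{\alpha:|1-\alpha|>1\}\subseteq\rho(\Delta,h)$ and therefore $\sigma(\Delta,h)\subseteq\{\alpha:|1-\alpha|\leq1\}$.

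Finally, since the spectrum of a bounded operator is closed, the inclusion $\{|1-\alpha|<1\}\subseteq\sigma(\Delta,h)$ from the first step passes to the closure, giving $\{|1-\alpha|\leq1\}\subseteq\sigma(\Delta,h)$; combined with the reverse inclusion just established, this proves the claimed equality. The step I expect to be the real obstacle is the boundedness estimate for the inverse: a naive triangle-inequality bound of $\sum_{n}n|\Delta x_{n}|$ through $\sum_{n}n|x_{n}|$ and $\sum_{n}n|y_{n}|$ fails, because $\sum_{n}n|y_{n}|$ may diverge on $h$, so the commutation identity $\Delta x=(\alpha I-\Delta)^{-1}(\Delta y)$ — which reduces everything to a clean geometric convolution against the sequence $(\Delta y_{n})$ rather than against $(y_{n})$ — is precisely what makes the argument go through.
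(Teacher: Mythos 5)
Your proposal is correct, but it takes a genuinely different route from the paper's. The paper proves both inclusions by working directly with the resolvent matrix: it treats $\Delta-\alpha I$ as a triangle, writes down an explicit inverse $B=(b_{nk})$, and argues that the quantity it calls $\|(\Delta-\alpha I)^{-1}\|_{(h:h)}$ is finite when $|1-\alpha|>1$ (ratio test) and infinite when $|1-\alpha|\leq 1$; no eigenvectors and no closedness argument appear. You instead fill the open disc with genuine eigenvectors $x_{n}=(1-\alpha)^{n-1}x_{1}\in h$, capture the boundary circle by closedness of the spectrum, and handle the exterior with the Neumann-type inverse plus the commutation identity $\Delta(\alpha I-\Delta)^{-1}=(\alpha I-\Delta)^{-1}\Delta$. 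Your exterior estimate is in fact more solid than the paper's: the paper's displayed inverse entries $b_{nk}=1/(1-\alpha)^{n+1}$ are missing the $k$-dependence (they should involve $n-k$), and its claimed norm computation is never justified as an operator-norm bound, whereas your reduction of $\sum_{n}n|\Delta x_{n}|$ to a geometric convolution against $(\Delta y_{n})$ is a complete argument. One small thing to add in the exterior step: state explicitly that $\alpha I-\Delta$ is injective on $h$ when $|1-\alpha|>1$ (immediate from your step 1, since any kernel vector is geometric with ratio $1-\alpha$ and hence not null), so that your bounded operator is a two-sided inverse and $\alpha$ is genuinely a regular value.

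One caveat you should be aware of: your step 1 contradicts the paper's Theorem 4.2, which asserts $\sigma_{p}(\Delta,h)=\emptyset$. The discrepancy is not your error. The paper defines and displays $\Delta$ as the upper bidiagonal forward difference matrix ($\Delta x_{k}=x_{k}-x_{k+1}$), which is the operator you analyzed, but its own proofs of Theorems 4.1 and 4.2 are carried out for the transposed, lower bidiagonal (backward difference) matrix --- that is why it can speak of a ``triangle'' with a lower triangular inverse and why it finds no eigenvalues. Since both matrices turn out to have the same spectrum, the statement of the theorem is unaffected, and your proof is the one faithful to the paper's stated definition. But note that your method would not transfer verbatim to the lower triangular version: there the point spectrum really is empty, and the inclusion of the open disc in the spectrum has to come from a different source, e.g.\ from the eigenvalues of the adjoint together with Lemma 4.4 (non-dense range), or from the paper's direct unboundedness claim for the inverse.
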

\begin{proof}
Let $\left|1-\alpha\right|>1$. Since $\Delta - \alpha I $ is triangle, $(\Delta - \alpha I )^{-1}$ exists and solving the matrix equation $(\Delta - \alpha I )x= y$  for $x$ in terms of $y$ gives the matrix $(\Delta - \alpha I )^{-1}=B=(b_{nk})$, where
\begin{eqnarray*}
b_{nk}= \left\{ \begin{array}{ccl}
\frac{1}{(1-\alpha)^{n+1}}&, & 0 \leq k \leq n,\\
0&, & k>n
\end{array} \right.
\end{eqnarray*}
for all $k,n\in\mathbb{N}$. Thus, we observe that
\begin{eqnarray*}
\|(\Delta - \alpha I )^{-1}\|_{(h:h)}&=&\sum_{n=1}^{\infty}n|b_{nk}-b_{n+1,k}|\nonumber\\
&\leq&\sum_{n=1}^{\infty}n|b_{nk}|+\sum_{n=1}^{\infty}n|b_{n+1,k}|\nonumber\\
&=&\sum_{n=1}^{\infty}\frac{n}{|1-\alpha|^{n+1}}+\sum_{n=1}^{\infty}\frac{n}{|1-\alpha|^{n+2}}.
\end{eqnarray*}
From the ratio test, we have
\begin{eqnarray*}
\|(\Delta - \alpha I )^{-1}\|_{(h:h)}<\infty,
\end{eqnarray*}
that is, $(\Delta - \alpha I )^{-1}\in (h:h)$. But for $\left|1-\alpha\right|\leq 1$,
\begin{eqnarray*}
\|(\Delta - \alpha I )^{-1}\|_{(h:h)}=\infty,
\end{eqnarray*}
that is, $(\Delta - \alpha I )^{-1}$ is not in $B(h)$.
This completes the proof.
\end{proof}

\begin{thm} \label{Thm2}
$\sigma_{p}(\Delta,h)=\emptyset$.
\end{thm}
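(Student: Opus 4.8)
The plan is to unravel the eigenvalue equation directly. By definition $\alpha\in\sigma_{p}(\Delta,h)$ precisely when $\alpha I-\Delta$ fails to be injective on $h$, that is, when there is a sequence $x=(x_{k})\in h$ with $x\neq\theta$ and $\Delta x=\alpha x$. By Theorem \ref{Thm1} any such $\alpha$ must satisfy $|1-\alpha|\leq 1$, so it suffices to examine this closed disc. First I would write the system $\Delta x=\alpha x$ row by row.

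Since $\Delta$ is an upper triangle, the system $x_{k}-x_{k+1}=\alpha x_{k}$ is a first-order linear recurrence, and I would solve it explicitly. It gives $x_{k+1}=(1-\alpha)x_{k}$, whence every solution has the geometric form $x_{k}=(1-\alpha)^{k-1}x_{1}$, determined by the single free parameter $x_{1}$ (with the degenerate case $\alpha=1$ giving $x=x_{1}e_{1}$, which I would handle separately). Thus any candidate eigenvector is a scalar multiple of the fixed sequence $g=\big((1-\alpha)^{k-1}\big)_{k}$, and the problem reduces to deciding for which $\alpha$ the sequence $g$ actually belongs to $h$.

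The decisive step---and the main obstacle---is exactly this membership test. I would impose the two defining conditions of $h$ on $g$: the null condition $\lim_{k}x_{k}=0$, and the finiteness of $\sum_{k}k|\Delta x_{k}|$, where for the geometric sequence $\Delta x_{k}=x_{k}-x_{k+1}=\alpha(1-\alpha)^{k-1}x_{1}$. The summability is then governed by $\sum_{k}k|1-\alpha|^{k-1}$, which I would control with the ratio test, while the null condition imposes a further constraint on $|1-\alpha|$. Carrying this out on the whole disc $|1-\alpha|\leq 1$, distinguishing the boundary $|1-\alpha|=1$ from the interior $|1-\alpha|<1$ and treating the first term $k=1$ with care, is where the argument truly lives: the emptiness of $\sigma_{p}(\Delta,h)$ must come from showing that these two conditions cannot both hold for a nonzero $g$. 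I would scrutinise this computation most closely, since the entire conclusion hinges on it---in particular on the precise orientation of $\Delta$ and on the role played by the first equation of the system.
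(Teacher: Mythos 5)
There is a genuine gap, and it sits exactly at the step you yourself flagged as decisive: the membership test goes the wrong way. If $\Delta$ is read as the upper triangular (forward difference) matrix displayed in the paper, so that the eigenvalue system is $x_k-x_{k+1}=\alpha x_k$, then your geometric solution $g_k=(1-\alpha)^{k-1}x_1$ \emph{does} belong to $h$ whenever $|1-\alpha|<1$: one has $g_k\to 0$, and since $\Delta g_k=\alpha(1-\alpha)^{k-1}x_1$, the ratio test gives $\sum_k k|\Delta g_k|=|\alpha|\,|x_1|\sum_k k|1-\alpha|^{k-1}<\infty$. Likewise your ``degenerate case'' $\alpha=1$ produces the eigenvector $e_1=(1,0,0,\ldots)\in h$ with $\Delta e_1=e_1$. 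So under your reading the two defining conditions of $h$ can both hold for a nonzero $g$, and carrying out your plan honestly would yield $\sigma_p(\Delta,h)\supseteq\{\alpha\in\mathbb{C}:|1-\alpha|<1\}$ --- the opposite of the theorem. No amount of care about the boundary circle or the first term can rescue the outline: for the genuine forward difference operator the point spectrum on $h$ really is the open disc, and in fact precisely these geometric sequences reappear in the paper as the eigenvectors establishing Theorem \ref{Thm3} for the adjoint.

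The paper reaches $\sigma_p(\Delta,h)=\emptyset$ because its proof solves a different system from yours: it writes $x_0=\alpha x_0$ and $-x_{n-1}+x_n=\alpha x_n$ for $n\geq 1$, i.e.\ it treats $\Delta$ as the \emph{lower} triangular (backward difference) matrix, the transpose of the matrix displayed in Section 3. For that system a purely algebraic argument closes the proof with no membership test at all: if $x_{n_0}$ is the first nonzero entry, row $n_0$ reads $x_{n_0}=\alpha x_{n_0}$, forcing $\alpha=1$; then row $n_0+1$ reads $-x_{n_0}+x_{n_0+1}=x_{n_0+1}$, forcing $x_{n_0}=0$, a contradiction. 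In short, the theorem as stated is correct only for the transposed operator, and the paper is internally inconsistent about which matrix $\Delta$ denotes; your proposal faithfully follows the displayed matrix and the name ``forward difference,'' but then the statement you set out to prove is false, so no completion of your argument can establish it.
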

\begin{proof}
Suppose that $\Delta x=\alpha x$ for $x\neq \theta$ in $h$. Then, by solving the system of linear equations
\begin{eqnarray*}
\begin{array}{rcl}
x_{0}&=&\alpha x_{0},\\
-x_{0}+x_{1}&=&\alpha x_{1}\\
-x_{1}+x_{2}&=&\alpha x_{2}\\
&\vdots&\\
-x_{n-1}+x_{n}&=&\alpha x_{n}\\
\\ &\vdots&
\end{array}
\end{eqnarray*}

we find that if $x_{n_{0}}$ is the first nonzero entry of the
sequence $x=(x_n)$, then $\alpha=1$. From the equality
$-x_{n_{0}}+x_{n_{0}+1}=\alpha x_{n_{0}+1}$ we have $x_{n_{0}}$ is
zero. This contradicts the fact that $x_{n_{0}}\neq0$, which
completes the proof.
\end{proof}

\begin{thm} \label{Thm3}
$\sigma_{p}(\Delta^{\ast},h^{\ast})=\left\{\alpha
\in\mathbb{C}:\left|1-\alpha\right|< 1\right\}$.
\end{thm}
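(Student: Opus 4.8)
The plan is to compute the kernel of $\Delta^{\ast}-\alpha I$ on the dual space directly, since $\sigma_{p}(\Delta^{\ast},h^{\ast})$ is by definition the set of $\alpha$ for which this kernel is nontrivial. First I would use Lemma \ref{lem1}(iii) to realise $h^{\ast}$ concretely: because $h$ has a Schauder basis of coordinate vectors, a functional $f\in h^{\ast}$ is represented by a sequence $f=(f_{k})\in h^{\beta}=\rho_{\infty}$ acting by $f(x)=\sum_{k}f_{k}x_{k}$, and under this identification $\Delta^{\ast}$ is given by the transpose of the band matrix $\Delta$. Writing $(\Delta^{\ast}-\alpha I)f=\theta$ out coordinatewise then yields a system of the same flavour as in the proof of Theorem \ref{Thm2}, but transposed: a single first-order linear recurrence tying $f_{k+1}$ to $f_{k}$ through the factor $(1-\alpha)$, together with one boundary equation fixing the first coordinate. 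Keeping careful track of the index origin here will matter, since an off-by-one in the boundary equation is exactly what distinguishes a trivial kernel from a rich one.

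Solving the recurrence is routine: it is geometric, so every solution is $f_{k}=(1-\alpha)^{k}f_{0}$ up to the index convention, with $f_{0}$ a free parameter. Hence a nonzero eigenvector for a given $\alpha\neq 1$ exists precisely when this geometric sequence defines an element of $h^{\ast}=\rho_{\infty}$, i.e. when $\sup_{n}n^{-1}\big|\sum_{k=1}^{n}f_{k}\big|<\infty$. The next step is to evaluate the partial sums of the geometric series in closed form and read off the set of $\alpha$ for which the $\rho_{\infty}$-condition is met. For $|1-\alpha|<1$ the series $\sum_{k}f_{k}$ converges, its partial sums are bounded, the Ces\`aro averages $n^{-1}\sum_{k=1}^{n}f_{k}$ tend to $0$, and so $f\in\rho_{\infty}$ and $\alpha$ is an eigenvalue; for $|1-\alpha|>1$ the terms grow geometrically, the averages diverge, and $f\notin\rho_{\infty}$.

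The hard part will be the boundary circle $|1-\alpha|=1$, where the terms $f_{k}$ neither decay nor grow. This is the only genuinely delicate estimate in the argument, and it is what must force the point spectrum to be the \emph{open} disk rather than the closed one: one has to show carefully that on this circle the geometric partial sums fail the requirement defining a valid nonzero eigenvector in $h^{\ast}$, so that the boundary is excluded. I would treat this case separately from the interior, since the naive ``geometric series converges iff $|1-\alpha|<1$'' reasoning glosses over exactly the boundary behaviour. As an independent consistency check I would compare the outcome with Proposition \ref{pr21}(e), which gives $\sigma_{p}(\Delta^{\ast},h^{\ast})=\sigma_{co}(\Delta,h)$, and with Theorem \ref{Thm1}: since $\sigma_{p}(\Delta^{\ast},h^{\ast})\subseteq\sigma(\Delta^{\ast},h^{\ast})=\sigma(\Delta,h)=\{\alpha:|1-\alpha|\le 1\}$ by Proposition \ref{pr21}(a), the answer is a priori confined to the closed disk, and the recurrence analysis is what determines whether it fills the interior and stops at the boundary. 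Everything except the index bookkeeping in the transpose system and the boundary membership test is pure formula manipulation.
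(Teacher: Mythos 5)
Your setup coincides with the paper's: identify $h^{\ast}$ with a sequence space through the coordinate pairing (the paper invokes $h^{\ast}\cong\sigma_{\infty}$ and in effect tests membership with absolute values inside the Ces\`aro sum, while you use $h^{\beta}=\rho_{\infty}$ from Lemma \ref{lem1}(iii); the discrepancy turns out to be immaterial), write out the transposed system, and solve the geometric recurrence $f_{k}=(1-\alpha)^{k}f_{0}$. Your treatment of the interior and exterior of the disk is fine. The genuine gap is exactly the step you defer: you assert that on the circle $|1-\alpha|=1$ one ``has to show carefully'' that the geometric partial sums fail the membership test, ``so that the boundary is excluded.'' That assertion is false, and no amount of care can establish it. Put $\beta=1-\alpha$ with $|\beta|=1$. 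If $\beta\neq1$, then $\bigl|\sum_{k=1}^{n}\beta^{k}\bigr|\leq 2/|\beta-1|$, so $n^{-1}\bigl|\sum_{k=1}^{n}f_{k}\bigr|\rightarrow0$; if $\beta=1$, i.e.\ $\alpha=0$, then $n^{-1}\bigl|\sum_{k=1}^{n}f_{k}\bigr|=|f_{0}|$ for all $n$. In either case $f\in\rho_{\infty}$, and the stronger test with absolute values inside the sum is passed as well, since $n^{-1}\sum_{k=1}^{n}|f_{k}|=|f_{0}|$. Hence every boundary point is an eigenvalue. The cleanest counterexample is $\alpha=0$: the constant sequence $f=(1,1,1,\ldots)$ solves the transposed system $f_{k}-f_{k+1}=0\cdot f_{k}$ and represents the functional $x\mapsto\sum_{k}x_{k}$, which is bounded on $h$ because $h\subset\ell_{1}$ by Lemma \ref{lem1}(ii) and inclusions between $BK$-spaces are automatically continuous; thus $0\in\sigma_{p}(\Delta^{\ast},h^{\ast})$ even though $|1-0|=1$.

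The recurrence analysis therefore yields the closed disk, $\sigma_{p}(\Delta^{\ast},h^{\ast})=\{\alpha\in\mathbb{C}:|1-\alpha|\leq1\}$, the reverse inclusion following, as you yourself note, from Theorem \ref{Thm1} and Proposition \ref{pr21}(a). You should know that the paper's own proof commits precisely the error your plan would be forced to commit: it claims that $\sup_{n}\frac{|x_{0}|}{n}\sum_{k=1}^{n}|1-\alpha|^{k}<\infty$ if and only if $|1-\alpha|<1$, whereas for $|1-\alpha|=1$ this supremum equals $|x_{0}|<\infty$. So your instinct that the boundary circle is the one delicate point is correct, but the verdict goes the other way: the boundary is included, the theorem as stated is not provable, and the defect propagates to the results resting on it (Theorems \ref{Thm4}, \ref{Thm5} and \ref{Thm8}(c)). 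For completeness, reading $\Delta$ literally as the displayed upper-bidiagonal forward-difference matrix does not rescue the statement either: then $\Delta^{\ast}$ is the lower-bidiagonal transpose, whose eigenvalue equations force $f=\theta$ coordinate by coordinate, giving $\sigma_{p}(\Delta^{\ast},h^{\ast})=\emptyset$ rather than the open disk.
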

\begin{proof}
Suppose that $\Delta^{\ast} x = \alpha x$ for $x\neq \theta$ in $h^{\ast}\cong \sigma_{\infty}$. Then, by solving the system of linear equations
\begin{eqnarray*}
\begin{array}{rcl}
x_{0}-x_{1}&=&\alpha x_{0},\\
x_{1}-x_{2}&=&\alpha x_{1},\\
&\vdots&\\
x_{n-1}-x_{n}&=&\alpha x_{n},\\ &\vdots&
\end{array}
\end{eqnarray*}
we observe that $x_{n}=(1-\alpha)^{n} x_{0}$. Therefore, $\stackrel{}{\underset{n}{\sup}}\frac{|x_{0}|}{n}\stackrel{n}{\underset{k=1}{\sum}}|1-\alpha|^{k}<\infty$ if and only if $|1-\alpha|<1$. This step concludes the proof.
\end{proof}

If $T\in B(h)$ with the matrix $A$, then it is known that the
adjoint operator $T^{\ast} :h^{\ast}\rightarrow h^{\ast}$ is defined by the transpose $A^{t}$ of the matrix $A$. It should be noted that the dual space $h^{\ast}$ of $h$ is isometrically isomorphic to the Banach space $\sigma_{\infty}$ of absolutely summable sequences normed by $\|x\|=\stackrel{\infty}{\underset{k=0}{\sum}}k|x_{k}-x_{k+1}|$.

\begin{lem} \cite[p. 59]{go} \label{Lem3.4.} $T$ has a dense range if and only if $T^{\ast}$ is one to one.
\end{lem}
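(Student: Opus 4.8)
The plan is to prove both implications through the defining duality relation $(T^{\ast}f)(x)=f(Tx)$ for all $f\in X^{\ast}$ and $x\in X$, combined with the Hahn--Banach theorem. The crux is the observation that $T^{\ast}f=0$ holds exactly when the functional $f$ annihilates the range $R(T)$, since $T^{\ast}f=0$ means $f(Tx)=0$ for every $x\in X$. I would therefore translate the statement ``$T^{\ast}$ is one to one'' into the statement ``the only functional annihilating $R(T)$ is the zero functional,'' and then recognize the latter as the standard dual characterization of density.

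For the forward direction, I would assume $T$ has dense range, so $\overline{R(T)}=X$, and take any $f\in X^{\ast}$ with $T^{\ast}f=0$. By the duality relation, $f$ vanishes on $R(T)$; since $f$ is continuous and $R(T)$ is dense in $X$, it follows that $f$ vanishes on all of $\overline{R(T)}=X$, whence $f=0$. This shows $\ker T^{\ast}=\{0\}$, i.e., $T^{\ast}$ is injective. This half is essentially immediate once the duality relation is written down, requiring only continuity of $f$ and the definition of density.

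For the reverse direction, I would argue by contraposition. Suppose $R(T)$ is not dense, so $\overline{R(T)}$ is a proper closed subspace of $X$ and there exists a point $x_{0}\in X\setminus\overline{R(T)}$. By the Hahn--Banach theorem (in its separation form, using that $\overline{R(T)}$ is a closed proper subspace), there is a functional $f\in X^{\ast}$ with $f\equiv0$ on $\overline{R(T)}$ but $f(x_{0})\neq0$, so $f\neq\theta$. Then $f(Tx)=0$ for every $x\in X$, that is $T^{\ast}f=\theta$ with $f\neq\theta$, so $T^{\ast}$ fails to be injective. Contraposition then yields that injectivity of $T^{\ast}$ forces $R(T)$ to be dense.

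The step I expect to be the main obstacle, or at least the only nonelementary ingredient, is the reverse direction's appeal to Hahn--Banach to manufacture a nonzero functional that annihilates a proper closed subspace while separating an external point; this is where the completeness and the normed structure of $X$ genuinely enter, whereas the forward direction is purely formal. Since the lemma is quoted from Goldberg \cite[p.~59]{go}, I would present this as the textbook duality argument rather than developing it from scratch.
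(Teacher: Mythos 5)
The paper offers no proof of this lemma at all---it is cited directly from Goldberg---and your argument is exactly the standard one behind that citation: $\ker T^{\ast}$ is the annihilator of $R(T)$, so injectivity of $T^{\ast}$ is equivalent to density of the range, with continuity of $f$ handling the forward direction and Hahn--Banach separation of a point from the proper closed subspace $\overline{R(T)}$ handling the converse. Your proof is correct; the only quibble is the closing remark that completeness of $X$ ``genuinely enters''---the Hahn--Banach separation argument needs only the normed structure, so the lemma holds for bounded operators on any normed space, complete or not.
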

\begin{thm} \label{Thm4}
$\sigma_{r}(\Delta,h)=\sigma_{p}(\Delta^{\ast},h^{\ast})$.
\end{thm}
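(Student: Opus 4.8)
The plan is to read off membership in $\sigma_r(\Delta,h)$ straight from its definition and then convert the density condition into a statement about the adjoint by means of Lemma \ref{Lem3.4.}. By definition, $\alpha\in\sigma_r(\Delta,h)$ exactly when $(\alpha I-\Delta)^{-1}$ exists, i.e. $\alpha I-\Delta$ is injective, while the domain of that inverse, namely $R(\alpha I-\Delta)$, is not dense in $h$. I would treat the injectivity requirement and the density requirement separately.

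First I would note that the injectivity condition is automatic here. By Theorem \ref{Thm2} we have $\sigma_p(\Delta,h)=\emptyset$, and since $\sigma_p(\Delta,h)$ is precisely the set of $\alpha$ for which $(\alpha I-\Delta)^{-1}$ fails to exist, the operator $\alpha I-\Delta$ is injective for \emph{every} $\alpha\in\mathbb{C}$. Consequently, for the purposes of this theorem, the relation $\alpha\in\sigma_r(\Delta,h)$ collapses to the single condition $\overline{R(\alpha I-\Delta)}\neq h$.

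Next I would invoke Lemma \ref{Lem3.4.}: the operator $\alpha I-\Delta$ has dense range in $h$ if and only if its adjoint $(\alpha I-\Delta)^{\ast}=\alpha I-\Delta^{\ast}$ is one to one on $h^{\ast}\cong\sigma_{\infty}$. Passing to contrapositives, $\overline{R(\alpha I-\Delta)}\neq h$ holds exactly when $\alpha I-\Delta^{\ast}$ is not injective, i.e. when $\alpha$ is an eigenvalue of $\Delta^{\ast}$, that is $\alpha\in\sigma_p(\Delta^{\ast},h^{\ast})$. Combining this equivalence with the previous paragraph delivers both inclusions simultaneously and hence the asserted equality $\sigma_r(\Delta,h)=\sigma_p(\Delta^{\ast},h^{\ast})$.

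The argument is essentially formal, so I do not anticipate a genuine obstacle; the only point demanding care is the identification $(\alpha I-\Delta)^{\ast}=\alpha I-\Delta^{\ast}$ acting as the transpose matrix on $h^{\ast}\cong\sigma_{\infty}$, which is exactly the framework in which $\sigma_p(\Delta^{\ast},h^{\ast})$ was already determined in Theorem \ref{Thm3}. As an independent confirmation, one could instead route the proof through the general subspectrum identities recorded in the preliminaries: $\sigma_r(\Delta,h)=\sigma_{co}(\Delta,h)\setminus\sigma_p(\Delta,h)$, which equals $\sigma_{co}(\Delta,h)$ by Theorem \ref{Thm2}, and then $\sigma_{co}(\Delta,h)=\sigma_p(\Delta^{\ast},h^{\ast})$ by Proposition \ref{pr21}(e); this yields the same conclusion and cross-checks the derivation based on Lemma \ref{Lem3.4.}.
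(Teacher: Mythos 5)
Your proof is correct and rests on the same two pillars as the paper's own argument: injectivity of $\alpha I-\Delta$ for every $\alpha$ (which the paper gets from the triangle structure, you from Theorem \ref{Thm2}) combined with Goldberg's Lemma \ref{Lem3.4.} to convert non-density of $R(\alpha I-\Delta)$ into non-injectivity of $\alpha I-\Delta^{\ast}$. If anything, your version is tighter: the paper only explicitly argues the inclusion $\sigma_{p}(\Delta^{\ast},h^{\ast})\subseteq\sigma_{r}(\Delta,h)$ for $|1-\alpha|<1$ using the concrete description from Theorem \ref{Thm3}, leaving the reverse containment implicit, whereas your formal equivalence ($\alpha\in\sigma_{r}$ iff range not dense iff adjoint not injective) delivers both inclusions simultaneously and does not need Theorem \ref{Thm3} at all.
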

\begin{proof}
For $\left|1-\alpha\right|< 1$, the operator $\Delta-\alpha I$ is
triangle, so has an inverse. But $\Delta^{\ast}-\alpha I$ is not
one to one by Theorem \ref{Thm3}. Therefore by Lemma
\ref{Lem3.4.}, $\overline{R(\Delta-\alpha I)}\neq h$ and this
step concludes the proof.
\end{proof}

\begin{thm} \label{Thm5}
$\sigma_{c}(\Delta,h)=\left\{\alpha
\in\mathbb{C}:\left|1-\alpha\right|=1\right\}$.
\end{thm}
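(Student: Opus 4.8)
The plan is to obtain $\sigma_c(\Delta,h)$ purely by set subtraction, exploiting the fact recorded in (\ref{eq1}) that the point, continuous, and residual spectra form a \emph{disjoint} partition of the full spectrum. Solving the decomposition $\sigma(\Delta,h)=\sigma_p(\Delta,h)\cup\sigma_c(\Delta,h)\cup\sigma_r(\Delta,h)$ for the middle piece gives
\begin{eqnarray*}
\sigma_c(\Delta,h)=\sigma(\Delta,h)\setminus\big[\sigma_p(\Delta,h)\cup\sigma_r(\Delta,h)\big],
\end{eqnarray*}
so the entire argument reduces to inserting the three sets that have already been computed in the preceding theorems.

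First I would recall the three ingredients: Theorem \ref{Thm1} gives $\sigma(\Delta,h)=\{\alpha\in\mathbb{C}:|1-\alpha|\leq1\}$; Theorem \ref{Thm2} gives $\sigma_p(\Delta,h)=\emptyset$; and Theorem \ref{Thm4} together with Theorem \ref{Thm3} gives $\sigma_r(\Delta,h)=\{\alpha\in\mathbb{C}:|1-\alpha|<1\}$. Substituting these into the displayed identity yields
\begin{eqnarray*}
\sigma_c(\Delta,h)=\{\alpha:|1-\alpha|\leq1\}\setminus\big[\emptyset\cup\{\alpha:|1-\alpha|<1\}\big]=\{\alpha:|1-\alpha|=1\},
\end{eqnarray*}
which is exactly the claimed set, so the proof would be complete.

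Since every constituent set is already in hand, there is no genuine analytic obstacle here; the only point requiring care is the \emph{disjointness} of the partition in (\ref{eq1}), because the set-theoretic subtraction is legitimate precisely because $\sigma_p(\Delta,h)$, $\sigma_c(\Delta,h)$, and $\sigma_r(\Delta,h)$ are pairwise disjoint. One should confirm that the boundary circle $|1-\alpha|=1$ is not secretly absorbed into $\sigma_p$ or $\sigma_r$: it avoids $\sigma_p=\emptyset$ trivially, and it avoids $\sigma_r=\{|1-\alpha|<1\}$ because the latter is the open disk. Thus removing the open disk from the closed disk leaves precisely the bounding circle, with no overlap and nothing left over, which is the statement to be proved.
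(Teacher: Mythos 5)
Your proof is correct, but it follows a different route from the paper's. You obtain the result purely by set subtraction: using the disjointness of the partition in (\ref{eq1}) together with Theorems \ref{Thm1}, \ref{Thm2}, \ref{Thm3} and \ref{Thm4}, you compute $\sigma_c(\Delta,h)=\sigma(\Delta,h)\setminus[\sigma_p(\Delta,h)\cup\sigma_r(\Delta,h)]=\{\alpha:|1-\alpha|\leq 1\}\setminus\{\alpha:|1-\alpha|<1\}$, and you correctly flag that the legitimacy of this subtraction rests on the pairwise disjointness of the three subspectra, which the paper itself asserts when stating (\ref{eq1}). The paper instead argues by direct verification of the defining conditions of the continuous spectrum on the boundary circle: $\Delta-\alpha I$ is a triangle, so its inverse exists but is unbounded, and since $\Delta^{\ast}-\alpha I$ is one to one (by Theorem \ref{Thm3}, as $\alpha$ lies outside the open disk), Lemma \ref{Lem3.4.} gives $\overline{R(\Delta-\alpha I)}=h$, i.e., the domain of the inverse is dense. (Note that the paper's proof as printed says ``for $|1-\alpha|<1$,'' which is evidently a typo for $|1-\alpha|=1$, since for $|1-\alpha|<1$ Theorem \ref{Thm3} shows the adjoint is \emph{not} injective.) The trade-off: the paper's argument exhibits concretely why points on the circle belong to $\sigma_c$ (unbounded inverse with dense domain), at the cost of invoking Goldberg's duality lemma and, strictly speaking, leaving the reverse inclusion to be inferred from the earlier theorems; your argument gets the equality in one stroke and is immune to the kind of slip that appears in the paper's text, but it is entirely parasitic on the prior computations and on the disjointness of the Kreyszig partition rather than on the structure of the operator itself.
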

\begin{proof}
For $\left|1-\alpha\right|< 1$, the operator $\Delta-\alpha I$ is
triangle, so has an inverse but is unbounded. Also $\Delta^{\ast}-\alpha I$ is
one to one by Theorem \ref{Thm3}. By Lemma
\ref{Lem3.4.}, $\overline{R(\Delta-\alpha I)}=h$. Thus, the proof is completed.
\end{proof}

\begin{thm} \label{Thm6}
$A_{3}\sigma(\Delta,h)=B_{3}\sigma(\Delta,h)=C_{3}\sigma(\Delta,h)=\emptyset$.
\end{thm}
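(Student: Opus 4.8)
The plan is to observe that the three states $A_3$, $B_3$, and $C_3$ in Goldberg's classification share a single defining feature: in each of them the operator $\Delta-\alpha I$ lies in column $3$ of Table 1.2, meaning that $(\Delta-\alpha I)^{-1}$ does not exist. By the very definition of the point spectrum, this is precisely the condition that $\alpha$ is an eigenvalue of $\Delta$ on $h$. The row label $A$, $B$, or $C$ only records the range behavior of $\Delta-\alpha I$ (whether the range is all of $h$, dense, or neither), and every operator falls into exactly one such row. Hence I would first record the set-theoretic identity
\begin{eqnarray*}
A_3\sigma(\Delta,h)\cup B_3\sigma(\Delta,h)\cup C_3\sigma(\Delta,h)=\sigma_p(\Delta,h),
\end{eqnarray*}
where the three sets on the left are pairwise disjoint.

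Next I would invoke Theorem \ref{Thm2}, which establishes that $\sigma_p(\Delta,h)=\emptyset$. Since the three sets on the left-hand side are disjoint subsets whose union is empty, each of them must itself be empty, yielding
\begin{eqnarray*}
A_3\sigma(\Delta,h)=B_3\sigma(\Delta,h)=C_3\sigma(\Delta,h)=\emptyset,
\end{eqnarray*}
as claimed. The only bookkeeping point worth verifying is that the sign discrepancy between $\Delta-\alpha I$ (used in the matrix computations of the earlier theorems) and the perturbed operator $\alpha I-\Delta$ (used in the definition of the spectrum) is immaterial here, since the two operators differ only by multiplication by $-1$ and therefore have identical injectivity, surjectivity, and range-closure behavior.

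The main obstacle in this theorem is essentially nonexistent: the statement is an immediate corollary of Theorem \ref{Thm2} combined with the state diagram of Table 1.2. No new analysis of the Hahn space $h$, its dual $\sigma_\infty$, or the norm $\|x\|_h$ is required. All the genuine work has already been carried out in showing that $\Delta-\alpha I$ is injective for every $\alpha\in\mathbb{C}$ — which is exactly what the emptiness of the point spectrum encodes — so the present theorem merely records that no spectral value of $\Delta$ can land in row $3$ of the Goldberg diagram.
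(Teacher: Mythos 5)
Your proof is correct and follows exactly the same route as the paper: the paper's own (one-line) proof likewise deduces the result from Theorem \ref{Thm2} together with Table 1.2, since column $3$ of Goldberg's classification is precisely the point spectrum. Your write-up simply makes explicit the decomposition $A_3\sigma\cup B_3\sigma\cup C_3\sigma=\sigma_p(\Delta,h)$ that the paper leaves implicit.
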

\begin{proof}
From Theorem \ref{Thm2} and Table 1.2., $A_{3}\sigma(\Delta,h)=B_{3}\sigma(\Delta,h)=C_{3}\sigma(\Delta,h)=\emptyset$ is observed.
\end{proof}

\begin{thm} \label{Thm7}
$C_{1}\sigma(\Delta,h)=\emptyset$ and $\alpha\in\sigma_{r}(\Delta,h)\cap C_{2}\sigma(\Delta,h) $.
\end{thm}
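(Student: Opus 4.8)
The plan is to read off the Goldberg state of each residual point directly from Table 1.2, using only Theorem \ref{Thm4} together with the norm estimate already obtained inside the proof of Theorem \ref{Thm1}. By Theorem \ref{Thm4} we know $\sigma_{r}(\Delta,h)=\sigma_{p}(\Delta^{\ast},h^{\ast})=\{\alpha:|1-\alpha|<1\}$, so it suffices to locate an arbitrary $\alpha$ with $|1-\alpha|<1$ in the state diagram. First I would note that for such $\alpha$ the matrix $\Delta-\alpha I$ is a triangle, hence injective, so the algebraic inverse $(\Delta-\alpha I)^{-1}$ exists; this immediately excludes the third column of Table 1.2 and places $\alpha$ in column $1$ or $2$.

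Next, because $\alpha\in\sigma_{r}(\Delta,h)$, we have $\overline{R(\Delta-\alpha I)}\neq h$, which is precisely the defining condition of row $C$. Thus every residual point already lies in state $C_{1}$ or $C_{2}$, and the whole theorem reduces to deciding whether the inverse $(\Delta-\alpha I)^{-1}$ is continuous. This is the one genuinely analytic step, and it is supplied for free by Theorem \ref{Thm1}: the computation there shows $\|(\Delta-\alpha I)^{-1}\|_{(h:h)}=\infty$ whenever $|1-\alpha|\leq 1$, and in particular on the open disc $|1-\alpha|<1$. Hence $(\Delta-\alpha I)^{-1}$ is unbounded, i.e.\ discontinuous, which is exactly column $2$.

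Combining the two observations, each $\alpha$ with $|1-\alpha|<1$ sits in state $C_{2}$, so $\sigma_{r}(\Delta,h)\subseteq C_{2}\sigma(\Delta,h)$; since Table 1.2 shows that states $C_{1}$ and $C_{2}$ together exhaust the residual spectrum, the reverse inclusion and the equality $\sigma_{r}(\Delta,h)=C_{2}\sigma(\Delta,h)$ follow, and no spectral value is left in state $C_{1}$, giving $C_{1}\sigma(\Delta,h)=\emptyset$. The only point I would handle with care is $\alpha=1$, which lies inside the disc yet makes the diagonal of $\Delta-\alpha I$ vanish, so the ``triangle, hence invertible'' shortcut must there be replaced by a direct inspection of the shift structure of $\Delta-I$; otherwise the argument is pure bookkeeping against the state diagram, with the unboundedness from Theorem \ref{Thm1} doing all the real work.
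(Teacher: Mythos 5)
Your proposal is correct and takes essentially the same route as the paper's own proof: both read the decomposition $\sigma_{r}(\Delta,h)=C_{1}\sigma(\Delta,h)\cup C_{2}\sigma(\Delta,h)$ off Table 1.2 and then exclude state $C_{1}$ by invoking the unboundedness of $(\Delta-\alpha I)^{-1}$ on $\left\{\alpha\in\mathbb{C}:|1-\alpha|\leq 1\right\}$ established in Theorem \ref{Thm1}. Your added caution about $\alpha=1$, where the diagonal of $\Delta-\alpha I$ vanishes so the ``triangle, hence invertible'' shortcut fails, is a genuine subtlety that the paper's proof passes over in silence.
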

\begin{proof}
We know $C_{1}\sigma(\Delta,h)\cup C_{2}\sigma(\Delta,h)=\sigma_{r}(\Delta,h)$ from Table 1.2. For $\alpha \in \sigma_{r}(\Delta,h) $, the operator $(\Delta- \alpha I)^{-1}$ is unbounded by Theorem \ref{Thm1}. So $C_{1}\sigma(\Delta,h)=\emptyset$. This completes the proof.
\end{proof}

\begin{thm} \label{Thm8}
The following results hold:
\begin{enumerate}
\item[(a)] $\sigma_{ap}(\Delta,h)=\sigma(\Delta,h)$.
\item[(b)] $\sigma_{\delta}(\Delta,h)=\sigma(\Delta,h)$.
\item[(c)] $\sigma_{co}(\Delta,h)=\left\{\alpha \in \mathbb{C}: \left|1-\alpha\right|<1\right\}$.
\end{enumerate}
\end{thm}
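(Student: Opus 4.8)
The plan is to read the three spectra off the decomposition already obtained, using the duality relations of Proposition~\ref{pr21} where the adjoint $\Delta^{*}$ enters and the Goldberg state diagram (Table~1.2) where the state of $\Delta-\alpha I$ has been pinned down. Part~(c) falls out by pure duality: relation~(e) of Proposition~\ref{pr21} gives $\sigma_{co}(\Delta,h)=\sigma_{p}(\Delta^{*},h^{*})$, and by Theorem~\ref{Thm3} this is exactly $\{\alpha\in\mathbb{C}:|1-\alpha|<1\}$, with no further computation.

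For part~(b) I would again use duality. Relation~(g) of Proposition~\ref{pr21} reads $\sigma(\Delta,h)=\sigma_{p}(\Delta,h)\cup\sigma_{ap}(\Delta^{*},h^{*})$; since $\sigma_{p}(\Delta,h)=\emptyset$ by Theorem~\ref{Thm2}, this forces $\sigma_{ap}(\Delta^{*},h^{*})=\sigma(\Delta,h)$. Feeding this into relation~(c), namely $\sigma_{ap}(\Delta^{*},h^{*})=\sigma_{\delta}(\Delta,h)$, yields $\sigma_{\delta}(\Delta,h)=\sigma(\Delta,h)$ at once.

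For part~(a) the duality route is obstructed, since relation~(d) would require $\sigma_{\delta}(\Delta^{*},h^{*})$ and hence the bidual of the non-reflexive space $h$; instead I would argue through the state diagram. As $\sigma_{ap}(\Delta,h)\subseteq\sigma(\Delta,h)$ always holds, only the reverse inclusion is at issue. By Theorem~\ref{Thm2} the point spectrum is empty, so $\sigma(\Delta,h)=\sigma_{c}(\Delta,h)\cup\sigma_{r}(\Delta,h)$. On the circle $|1-\alpha|=1$ Theorem~\ref{Thm5} places $\alpha$ in $\sigma_{c}$, and since $\sigma_{c}$ labels only the cell $B_{2}$ of Table~1.2, each such $\alpha$ lies in $\sigma_{ap}$; in the open disk $|1-\alpha|<1$ Theorem~\ref{Thm7} places $\Delta-\alpha I$ in state $C_{2}$, whose entry again contains $\sigma_{ap}$. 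Hence $\sigma(\Delta,h)\subseteq\sigma_{ap}(\Delta,h)$ and equality follows.

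The only real obstacle I anticipate is the identification of the precise Goldberg state in each regime rather than mere membership in $\sigma(\Delta,h)$. The delicate case is the boundary $|1-\alpha|=1$: it must be certified as $B_{2}$, so that it feeds both $\sigma_{ap}$ and $\sigma_{\delta}$, and this rests on Theorem~\ref{Thm5} together with the fact that $\sigma_{c}$ occupies only the $B_{2}$ cell. Once the regimes $|1-\alpha|>1$, $=1$, $<1$ are matched to the states $A_{1}$, $B_{2}$, $C_{2}$, parts~(a) and~(b) amount to a reading of the column-2 entries of the table, while~(c) is immediate from the duality dictionary.
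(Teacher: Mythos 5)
Your proof is correct, but for parts (b) and (c) it takes a genuinely different route from the paper. The paper proves all three parts by reading cells of Table~1.2: for (a) it uses $\sigma_{ap}(\Delta,h)=\sigma(\Delta,h)\setminus C_{1}\sigma(\Delta,h)$ with $C_{1}\sigma(\Delta,h)=\emptyset$ (Theorem~\ref{Thm7}); for (b) it uses $\sigma_{\delta}(\Delta,h)=\sigma(\Delta,h)\setminus A_{3}\sigma(\Delta,h)$ with $A_{3}\sigma(\Delta,h)=\emptyset$ (Theorem~\ref{Thm6}); for (c) it writes $\sigma_{co}(\Delta,h)=C_{1}\sigma(\Delta,h)\cup C_{2}\sigma(\Delta,h)\cup C_{3}\sigma(\Delta,h)$ and identifies this with the residual spectrum, i.e.\ the open disk. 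Your part (a) is essentially the paper's argument (membership of each spectral point in a Goldberg state whose cell contains $\sigma_{ap}$ amounts to the same identity $\sigma_{ap}=\sigma\setminus C_{1}\sigma$). But your parts (b) and (c) instead invoke the duality dictionary of Proposition~\ref{pr21}: relation (e) plus Theorem~\ref{Thm3} gives (c) in one line, and relations (g) and (c) of that proposition plus Theorem~\ref{Thm2} give (b). This buys two things: your proofs of (b) and (c) depend only on the point spectra of $\Delta$ and $\Delta^{\ast}$ (Theorems~\ref{Thm2} and~\ref{Thm3}) and are entirely independent of the Goldberg state classification in Theorems~\ref{Thm6} and~\ref{Thm7}; and your (c) avoids the paper's awkward self-citation (its proof of (c) cites ``Theorems~\ref{Thm7} and~\ref{Thm8}'' inside the proof of Theorem~\ref{Thm8} itself, where Theorem~\ref{Thm4} is evidently what is meant). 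What the paper's table-reading buys in exchange is uniformity: all three parts follow from one mechanism, with no appeal to the adjoint beyond what was already used earlier.
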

\begin{proof} (a) Since $\sigma_{ap}(\Delta,h)=\sigma(\Delta,h) \backslash C_{1}\sigma(\Delta,h)$ from Table 1.2. and $C_{1}\sigma(\Delta,h)=\emptyset$ by Theorem \ref{Thm7}, we have $\sigma_{ap}(\Delta,h)=\sigma(\Delta,h)$.

(b) Since $\sigma_{\delta}(\Delta,h)=\sigma(\Delta,h) \backslash A_{3}\sigma(\Delta,h)$ from Table 1.2 and  $A_{3}\sigma(\Delta,h)=\emptyset$ by Theorem \ref{Thm6}, we have $\sigma_{\delta}(\Delta,h)=\sigma(\Delta,h)$.

(c) Since the equality $\sigma_{co}(\Delta,h)=C_{1}\sigma(\Delta,h)\cup C_{2}\sigma(\Delta,h)\cup C_{3}\sigma(\Delta,h)$ holds from Table 1.2, we have $\sigma_{co}(\Delta,h)=\left\{\alpha \in \mathbb{C}: \left|1-\alpha\right|<1\right\}$ by Theorems \ref{Thm7} and \ref{Thm8}.
\end{proof}

The next corollary can be obtained from Proposition 2.1.
\begin{cor} The following results hold:
\begin{enumerate}
\item[(a)] $\sigma_{ap}(\Delta^{\ast},\ell_{1})=\sigma(\Delta,h)$.
\item[(b)] $\sigma_{\delta}(\Delta^{\ast},\ell_{1})=\left\{\alpha: \left|\alpha-(2-\delta)^{-1}\right|=(1-\delta)(2-\delta)\right\}\cup E$.
\item[(c)] $\sigma_{p}(\Delta^{\ast},\ell_{1})=\left\{\alpha \in \mathbb{C}: \left|\alpha-(2-\delta)^{-1}\right|<(1-\delta)/(2-\delta)\right\}\cup S$.
\end{enumerate}
\end{cor}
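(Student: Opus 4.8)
The plan is to derive all three statements of the final corollary directly from Proposition~\ref{pr21}, using the earlier theorems of this section together with the identification $h^{\ast}\cong\sigma_{\infty}\cong\ell_{1}$ (up to isometric isomorphism). Since the corollary concerns the adjoint operator $\Delta^{\ast}$ acting on $\ell_{1}$, each part should follow by transferring a spectral set already computed for $\Delta$ on $h$ across the duality relations (a)--(g) of Proposition~\ref{pr21}.

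For part~(a), I would invoke Proposition~\ref{pr21}(d), which states $\sigma_{\delta}(T^{\ast},X^{\ast})=\sigma_{ap}(T,X)$; reading this with $T=\Delta$ and $X=h$, one gets $\sigma_{\delta}(\Delta^{\ast},h^{\ast})=\sigma_{ap}(\Delta,h)$. However, the claimed equality is for $\sigma_{ap}(\Delta^{\ast},\ell_{1})$, so the relevant relation is instead Proposition~\ref{pr21}(c), $\sigma_{ap}(\Delta^{\ast},X^{\ast})=\sigma_{\delta}(\Delta,X)$. Combining this with Theorem~\ref{Thm8}(b), which gives $\sigma_{\delta}(\Delta,h)=\sigma(\Delta,h)$, yields $\sigma_{ap}(\Delta^{\ast},\ell_{1})=\sigma(\Delta,h)$, exactly as asserted. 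For part~(c), the natural tool is Proposition~\ref{pr21}(e), $\sigma_{p}(T^{\ast},X^{\ast})=\sigma_{co}(T,X)$, so that $\sigma_{p}(\Delta^{\ast},\ell_{1})=\sigma_{co}(\Delta,h)$, and then Theorem~\ref{Thm8}(c) identifies this set as $\{\alpha\in\mathbb{C}:|1-\alpha|<1\}$.

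The main obstacle I anticipate is reconciling the form of the answers printed in the corollary with what the duality relations plus the section's theorems actually produce. As stated, parts~(b) and~(c) involve an auxiliary parameter $\delta$ and unexplained sets $E$, $S$ that are never defined in the excerpt, and the disc $\{\alpha:|\alpha-(2-\delta)^{-1}|<(1-\delta)/(2-\delta)\}$ does not literally match the disc $\{\alpha:|1-\alpha|<1\}$ obtained above for $\sigma_{co}(\Delta,h)$. The honest plan, therefore, is to prove the clean consequences that follow rigorously from Proposition~\ref{pr21} and Theorems~\ref{Thm6}--\ref{Thm8}: namely $\sigma_{ap}(\Delta^{\ast},\ell_{1})=\sigma(\Delta,h)=\{\alpha:|1-\alpha|\le 1\}$, $\sigma_{p}(\Delta^{\ast},\ell_{1})=\{\alpha:|1-\alpha|<1\}$, and $\sigma_{\delta}(\Delta^{\ast},\ell_{1})=\sigma_{ap}(\Delta,h)=\{\alpha:|1-\alpha|\le 1\}$ via Proposition~\ref{pr21}(d) and Theorem~\ref{Thm8}(a).

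For the defect spectrum in part~(b) specifically, I would argue as follows: by Proposition~\ref{pr21}(d) we have $\sigma_{\delta}(\Delta^{\ast},h^{\ast})=\sigma_{ap}(\Delta,h)$, and Theorem~\ref{Thm8}(a) gives $\sigma_{ap}(\Delta,h)=\sigma(\Delta,h)=\{\alpha:|1-\alpha|\le 1\}$ from Theorem~\ref{Thm1}. If the boundary circle and the interior disc are what the parameters $(2-\delta)^{-1}$ and $(1-\delta)/(2-\delta)$ are meant to encode under a change of variables, then the verification reduces to checking that this reparametrization maps the computed discs and circles onto the displayed ones; I would carry out that substitution explicitly and confirm the two descriptions coincide, flagging any residual discrepancy in the sets $E$ and $S$ as requiring the separate definitions that the corollary presupposes.
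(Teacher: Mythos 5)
Your derivation is exactly the one the paper intends: the paper offers no proof beyond the one-line remark that the corollary ``can be obtained from Proposition 2.1,'' and your chain --- Proposition \ref{pr21}(c) with Theorem \ref{Thm8}(b) for part (a), Proposition \ref{pr21}(d) with Theorem \ref{Thm8}(a) for part (b), and Proposition \ref{pr21}(e) with Theorem \ref{Thm8}(c) for part (c) --- is precisely that intended application. Your diagnosis of the printed statement is also correct and worth keeping: the parameter $\delta$ and the sets $E$ and $S$ occur nowhere else in the paper, evidently survive from a corollary about a different, parametrized operator, and the only conclusions that Proposition \ref{pr21} together with Theorems \ref{Thm1}--\ref{Thm8} actually yield are the clean ones you record, namely $\sigma_{ap}(\Delta^{\ast},\ell_{1})=\sigma_{\delta}(\Delta^{\ast},\ell_{1})=\left\{\alpha\in\mathbb{C}:\left|1-\alpha\right|\leq 1\right\}$ and $\sigma_{p}(\Delta^{\ast},\ell_{1})=\left\{\alpha\in\mathbb{C}:\left|1-\alpha\right|<1\right\}$, so the defect lies in the paper's statement rather than in your argument.
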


\section{Conclusion}
Hahn \cite{Hahn} defined the space $h$ and gave its some general
properties. Goes and Goes \cite{GoesII} studied the functional
analytic properties of the space $h$. The study on the Hahn sequence
space was initiated by Rao \cite{Rao} with certain specific purpose
in Banach space theory. Also Rao \cite{Rao} emphasized on some matrix
transformations. Rao and Srinivasalu \cite{RaoI} introduced a new
class of sequence space called the semi replete space. Rao and
Subramanian \cite{RaoII} defined the semi Hahn space and proved that
the intersection of all semi Hahn spaces is Hahn space.
Balasubramanian and Pandiarani \cite{balasub} defined the new
sequence space $h(F)$ called the Hahn sequence space of fuzzy
numbers and proved that $\beta-$ and $\gamma-$duals of $h(F)$ is the
Ces\`{a}ro space of the set of all fuzzy bounded sequences. The
sequence space $h$ was introduced by Hahn \cite{Hahn} and Goes and Goes
\cite{GoesII}, and Rao \cite{Rao,RaoI,RaoII}
investigated some properties of the space $h$. Quite recently, Kiri\c{s}ci
\cite{kirisci1} has defined a new Hahn sequence space by using Ces\`{a}ro mean, in
\cite{kirisci2}.

The difference matrix $\Delta$ was used for determining the spectrum
or fine spectrum acting as a linear operator on any of the classical
sequence spaces $c_0$ and $c$, $\ell_1$ and $bv$, $\ell_p$ for
$(1\leq p<\infty)$, respectively in \cite{bafb1}, \cite{Fu1} and
\cite{aafb2}.

As a natural continuation of this paper, one can study the spectrum
and fine spectrum of the Ces\`{a}ro operator, Weighted mean operator
or another known operators in the sequence space $h$.
\section*{Acknowledgement}
We would like to thank Professor Feyzi Ba\c sar, Fatih University,
B\"uy\"uk\c cekmece Campus, 34500 - \.Istanbul, Turkey, for his
careful reading and valuable suggestions on the earlier version of
this paper.

\end{document}